\newtheorem{theorem}[equation]{Theorem}
\newtheorem{lemma}[equation]{Lemma}
\newtheorem{proposition}[equation]{Proposition}
\newtheorem{remark}[equation]{Remark}
\newtheorem{nonsec}[equation]{}
\newcommand{\R}{{\mathbb R}}
\newcommand{\Bn}{ {\mathbb{B}^n} }
\newcommand{\Rn}{ {\mathbb{R}^n} }
\newcommand{\sh}{\,\textnormal{sh}}
\newcommand{\ch}{\,\textnormal{ch}}
\numberwithin{equation}{section}
\title{Remarks on the scale invariant Cassinian metric}
\author{Gendi Wang} %%%%这是文章的作者
\address{School of Science, Zhejiang Sci-Tech University, Hangzhou 310018, China}  %%%%这是作者单位
\email{gendi.wang@zstu.edu.cn} %%%%这是作者邮箱
\author{Xiaoxue Xu} %%%%多个作者使用
\address{School of Science, Zhejiang Sci-Tech University, Hangzhou 310018, China}
\email{xiaoxue\_xu@126.com}
\author{Matti Vuorinen} %%%%多个作者使用
\address{Department of Mathematics and Statistics, University of Turku, Turku 20014, Finland}
\email{vuorinen@utu.fi}
\begin{document}  %%%%开始文本

%==========================================================================
\newcounter{minutes}\setcounter{minutes}{\time}
\divide\time by 60
\newcounter{hours}\setcounter{hours}{\time}
\multiply\time by 60 \addtocounter{minutes}{-\time}
%==========================================================================
%%%%%自动生成文件的时间，在第一页的下面
%==========================================================================
\def\thefootnote{}
\footnotetext{ {\tiny File:~\jobname.tex,
          printed: \number\year-\number\month-\number\day,
          \thehours.\ifnum\theminutes<10{0}\fi\theminutes }}
\makeatletter\def\thefootnote{\@arabic\c@footnote}\makeatother
%==========================================================================

\maketitle

%===============================================================================
\begin{abstract}
We study the geometry of the scale invariant Cassinian metric
and prove sharp comparison inequalities between this  metric and the hyperbolic metric
in the case when the domain is either the unit ball or the upper half space.
We also prove  sharp distortion inequalities for the scale invariant Cassinian metric under M\"obius transformations.
\end{abstract}
%===============================================================================

%===============================================================================
{\small \sc Keywords.} {the scale invariant Cassinian metric, the hyperbolic metric, M\"obius transformation }

{\small \sc 2010 Mathematics Subject Classification.} {30F45 (51M10)}
%===============================================================================

%%%%%%%%%%%%%%%%%%%%%%%%%%%%%%%%%%%%%%%%%%%%%%%%%%%%%%%%%%%%%%%%%%%%%
%%%%%%%%%%%%%%%%%%%%%%%%%%%%%%%%%%%%%%%%%%%%%%%%%%%%%%%%%%%%%%%%%%%%%
%%%%%%%%%%%%%%%%%%%%%%%%%%%%%%%%%%%%%%%%%%%%%%%%%%%%%%%%%%%%%%%%%%%%%
\section{Introduction}
%%%%%%%%%%%%%%%%%%%%%%%%%%%%%%%%%%%%%%%%%%%%%%%%%%%%%%%%%%%%%%%%%%%%%
%%%%%%%%%%%%%%%%%%%%%%%%%%%%%%%%%%%%%%%%%%%%%%%%%%%%%%%%%%%%%%%%%%%%%
%%%%%%%%%%%%%%%%%%%%%%%%%%%%%%%%%%%%%%%%%%%%%%%%%%%%%%%%%%%%%%%%%%%%%

In the Euclidean space $\Rn, n\ge 2\,,$
the natural way to measure distance between two points $x,y \in \Rn$
is to use the length $|x-y|$ of the segment joining the points.
In geometric function theory \cite{gh},
one studies functions defined in subdomains $D \subset \Rn\,,$ and measures distances between two points $x,y \in D\,.$
In this case the Euclidean distance is no longer an adequate method for measuring the distance,
because one has to take into account also the position of the points relative to the boundary $\partial D \,.$

During the past few decades,
many authors have suggested metrics for this purpose.
In the case of the simplest domain, the unit ball $\Bn\,,$
we have the hyperbolic or Poincar\'e metric that is the most common metric in this case.
Therefore, it is a natural idea to analyze the various equivalent definitions of the hyperbolic metric
and to use these to generalize,  if possible, the hyperbolic metric to the case of a given domain $D \subset \Rn\,.$
These generalizations capture usually some but not all features of the hyperbolic metric and are thus called hyperbolic type metrics \cite{b98,fmv,gp,h0,h,i1,Ibragimov,I2,klvw,s99}.

Because the usefulness of a metric depends on how well its invariance properties match those of the function spaces studied,
we now analyze hyperbolic type metrics from this point of view.
The best we can expect is invariance in the same sense as the hyperbolic metrics are invariant,
namely invariance under M\"obius transformations of the M\"obius space
$( \overline{\mathbb{R}}^n, q)\, ,  \overline{\mathbb{R}}^n =  {\mathbb{R}}^n \cup \{ \infty\}\,,$ equipped with the chordal metric $q \,.$
Another useful notion is invariance with respect to similarity transformations.
A similarity transformation is a transformation of the form $x \mapsto \lambda U(x) + b$ where $\lambda >0\,,$ $b \in \mathbb{R}^n\,,$
and $U$ is an orthogonal map, i.e., a linear map with $|U(x)|=|x|$ for all $x \in \mathbb{R}^n\,.$

The quasihyperbolic and the distance ratio metrics introduced by Gehring and Palka \cite{gp}
have become widely used hyperbolic type metrics in geometric function theory in plane and space \cite{gh}.
Both metrics are defined for subdomains of $ {\mathbb{R}}^n \,$ and are invariant under similarity transformations, but they are not M\"obius invariant.
M\"obius invariant metrics, defined in terms of the absolute ratios of quadruples of points, were studied by several authors
in the case of a general domain $D \subset \overline{\mathbb{R}}^n$ with ${\rm card} (\overline{\mathbb{R}}^n \setminus D  ) \ge 2\,.$
These metrics include the Apollonian metric of Beardon \cite{b98},
the M\"obius invariant metric of Seittenranta  \cite{s99},
and the generalized hyperbolic metric of H\"ast\"o \cite{h}.
Each of these metrics generates its own geometry and the study of transformation rules of these metrics
under M\"obius transformations and conformal mappings are natural questions to study.
If we can describe the balls of a metric space ''explicitly'',
then we already know a lot about the geometry of the metric --
this requires that we can estimate the metric in terms of well-known metrics.
For a survey and comparison inequalities between some of these metrics, see \cite{dhv,hkvz,h1,MS,MS2,s99,wv,z}.

Recently Ibragimov \cite{Ibragimov} introduced the scale invariant Cassinian metric $\tilde{\tau}_{D}$, see the definition below in \ref{def}.
%as
%\begin{equation}\label{mytiltau}
%\tilde{\tau}_{D}(x,y)=\log\left(1+\sup_{p\in \partial D}\frac{|x-y|}{\sqrt{|x-p||y-p|}}\right).
%\end{equation}
It is readily seen that the metric $\tilde{\tau}_{D}$
is invariant under similarity transformations.
Several authors \cite{Ibragimov,MS,MS2} have studied some basic properties of the scale invariant Cassinian metric
and its distortion under M\"obius transformations of the unit ball,
and also quasi-invariance properties under quasiconformal mappings.

In this paper,
we will continue this research and study the geometry of the scale invariant Cassinian metric
and establish sharp comparison results between this metric and the hyperbolic metric of the unit ball or of the upper half space,
and also prove sharp distortion inequalities under M\"obius transformations.

\bigskip

%%%%%%%%%%%%%%%%%%%%%%%%%%%%%%%%%%%%%%%%%%%%%%%%%%%%%%%%%%%%%%%%%%%%%
%%%%%%%%%%%%%%%%%%%%%%%%%%%%%%%%%%%%%%%%%%%%%%%%%%%%%%%%%%%%%%%%%%%%%
%%%%%%%%%%%%%%%%%%%%%%%%%%%%%%%%%%%%%%%%%%%%%%%%%%%%%%%%%%%%%%%%%%%%%
\section{Preliminaries}\label{section 2}
%%%%%%%%%%%%%%%%%%%%%%%%%%%%%%%%%%%%%%%%%%%%%%%%%%%%%%%%%%%%%%%%%%%%%
%%%%%%%%%%%%%%%%%%%%%%%%%%%%%%%%%%%%%%%%%%%%%%%%%%%%%%%%%%%%%%%%%%%%%
%%%%%%%%%%%%%%%%%%%%%%%%%%%%%%%%%%%%%%%%%%%%%%%%%%%%%%%%%%%%%%%%%%%%%

%==========================================================================
\begin{nonsec}{\bf Hyperbolic metric.}
{\rm
The hyperbolic metric $\rho_{\mathbb{B}^n}$ and $\rho_{\mathbb{H}^n}$
of the unit ball ${\mathbb{B}^n}= \{ z\in {\mathbb{R}^n}: |z|<1 \} $
and
of the upper half space ${\mathbb{H}^n} = \{ (x_1,\ldots,x_n)\in {\mathbb{R}^n}:  x_n>0 \}$
can be defined as follows.
By \cite[p.40]{b} we have for $x,y\in\mathbb{B}^n$\,,
\begin{equation}\label{sro}
\sh{\frac{\rho_{\mathbb{B}^n}(x,y)}{2}}=\frac{|x-y|}{\sqrt{1-|x|^2}{\sqrt{1-|y|^2}}}\,,
\end{equation}
and
by \cite[p.35]{b} for  $ x,y\in \mathbb{H}^n$\,,
\begin{equation}\label{cro}
\ch{\rho_{\mathbb{H}^n}(x,y)}=1+\frac{|x-y|^2}{2x_ny_n}\,.
\end{equation}
Two special formulas of the hyperbolic metric are frequently used \cite[(2.17),(2.6)]{vu2}\,:
\begin{equation}\label{bro}
\rho_{\mathbb{B}^n}(r e_1,s e_1)=\log\left(\frac{1+s}{1-s}\cdot\frac{1-r}{1+r}\right)\,,\quad {\rm for }\quad -1<r<s<1\,,\quad s>0\,,
\end{equation}
and
\begin{equation}\label{hro}
\rho_{\mathbb{H}^n}(r e_n,s e_n)=\log \frac sr\,,\quad {\rm for }\quad 0<r<s\,.
\end{equation}
}
\end{nonsec}
%==========================================================================
%==========================================================================
\begin{nonsec}{\bf Absolute ratio.}
{\rm
For a quadruple of distinct points  $a,b,c,d, \in \overline{\mathbb{R}}^n$\,,
the absolute ratio is defined as
$$
|a,b,c,d|= \frac{q(a,c)q(b,d)}{q(a,b) q(c,d)}\,,
$$
where $q(a,c)$ is the chordal distance \cite[(1.14)]{vu2}.
The most important property of the absolute ratio is its invariance under M\"obius transformations \cite[Theorem 3.2.7]{b}.
%The set of M\"obius transformations is denoted by $\mathcal{GM}(\overline{\mathbb{R}}^n) \,.$
For the basic properties of M\"obius transformations the reader is referred to \cite{b}\,.
}
\end{nonsec}
%==========================================================================

In terms of the absolute ratio,
the hyperbolic metric $\rho_D\,,$ $D \in \{\mathbb{B}^n,\mathbb{H}^n\}\,,$ can be defined for $x,y \in D$ as follows \cite[(7.26)]{b}\,:
\begin{equation}\label{absratrho}
\rho_D(x,y) = \sup \{  \log|u,x,y,v|:   u,v \in \partial D\} \,.
\end{equation}

Because of the M\"obius invariance of the absolute ratio and \eqref{absratrho}\,,
we may define for every M\"obius transformation $g$ the hyperbolic metric in $g(\mathbb{B}^n)$.
This metric will be denoted by $\rho_{g(\mathbb{B}^n)}$.
In particular, if $g: {\mathbb{B}^n}\to {\mathbb{H}^n}$ is a M\"obius transformation with $g({\mathbb{B}^n})={\mathbb{H}^n},$
then for all $x,y \in{\mathbb{B}^n}$ there holds $\rho_{\mathbb{B}^n}(x,y)= \rho_{\mathbb{H}^n}(g(x),g(y))\,.$

%==========================================================================
\begin{nonsec}{\bf Distance ratio metric.}\label{jmetric}
{\rm
For a proper open subset $D$ of $\mathbb{R}^{n}$ and for $x,y\in D$,
the  distance ratio metric $j_D$ is defined as
$$
 j_D(x,y)=\log \left( 1+\frac{|x-y|}{\min \{d(x, \partial D),d(y, \partial D) \} } \right)\,,
$$
where $d(x, \partial D)$ denotes the Euclidean distance from the point $x$ to the boundary $\partial D$.
The distance ratio metric was introduced by Gehring and Palka \cite{gp}
and in the above simplified form by Vuorinen \cite[(2.34)]{vu2}.
}
\end{nonsec}
%==========================================================================

The well-known relation between the distance ratio metric and the hyperbolic metric is shown in the following lemma.

%=====================================================================================
\begin{lemma}\label{c} \cite[Lemma 7.56]{avv}\label{le8}, \cite[Lemma 2.41(2)]{vu2}
Let $D\in\{\mathbb{B}^{n},\mathbb{H}^{n}\}$. Then for  all $x,y \in D$, we have
\begin{equation*}
\frac12 \rho_{D}(x,y)\le j_{D}(x,y)\leq\rho_{D}(x,y)\,.
\end{equation*}

%\begin{equation*}
%\frac12 \rho_{\mathbb{H}^{n}}(x,y)\le j_{\mathbb{H}^{n}}(x,y)\leq\rho_{\mathbb{H}^{n}}(x,y)\,,\quad  {  for}  \quad x,y \in\mathbb{H}^{n}\,.
%\end{equation*}
\end{lemma}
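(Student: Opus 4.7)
The plan is to establish the two inequalities $j_D \le \rho_D$ and $\rho_D \le 2 j_D$ separately for $D = \mathbb{H}^n$ and $D = \mathbb{B}^n$, working directly from the explicit formulas \eqref{sro}--\eqref{hro}.

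For the half-space, \eqref{cro} together with $\ch(2\alpha) = 1 + 2 \sh^2(\alpha)$ gives $\sh(\rho_{\mathbb{H}^n}(x,y)/2) = |x-y|/(2\sqrt{x_n y_n})$. The \emph{lower bound} $\rho \le 2 j$ then follows from the elementary estimate $\arsh(t) \le \log(1 + 2t)$ (a consequence of $\sqrt{t^2+1} \le 1 + t$) combined with $\sqrt{x_n y_n} \ge \min(x_n, y_n)$, yielding $\rho_{\mathbb{H}^n}/2 \le \log(1 + |x-y|/\sqrt{x_n y_n}) \le j_{\mathbb{H}^n}$. For the \emph{upper bound} $j \le \rho$, I will use that the vertical projection $z \mapsto z_n e_n$ is a hyperbolic contraction, so $\rho_{\mathbb{H}^n}(x,y) \ge |\log(x_n/y_n)|$; rewriting $e^\rho - 1 = 2 e^{\rho/2} \sh(\rho/2)$ and substituting gives $e^\rho - 1 \ge |x-y|/\min(x_n, y_n)$, i.e., $\rho \ge j$.

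For $D = \mathbb{B}^n$, assume without loss of generality that $|x| \le |y|$, and set $A = \sqrt{(1-|x|^2)(1-|y|^2)}$, $B = |x-y|$, $d = 1-|y|$, $d' = 1-|x|$, so $d \le d'$. By \eqref{sro}, $\sh(\rho_{\mathbb{B}^n}/2) = B/A$. For the upper bound I will combine the triangle inequality $\rho_{\mathbb{B}^n}(x,y) \ge \rho_{\mathbb{B}^n}(y,0) - \rho_{\mathbb{B}^n}(x,0)$, evaluated via \eqref{bro}, to obtain $\rho \ge \log(d'/d)$, together with the bound $A \le 2\sqrt{dd'}$ (valid since $(2-d)(2-d')\le 4$); these combine to give $e^{\rho/2} \ge A/(2d)$, whence $\rho \ge j$ by the same rewriting of $e^\rho-1$ as in the half-space case.

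The hard part will be the lower bound $\rho_{\mathbb{B}^n} \le 2 j_{\mathbb{B}^n}$. Expanding $\sh(\log(1 + B/d)) = B(2d+B)/(2d(d+B))$ and using monotonicity of $\sh$, the desired inequality reduces to $B(2d-A) \le 2d(A-d)$. When $A \ge 2d$ the left side is non-positive and the inequality is trivial; in the remaining case $A < 2d$ I will use the triangle-inequality bound $B \le |x|+|y| = 2-d-d'$ and reduce the matter to the polynomial inequality $(d+t)(2-d)(2-t)^2 \ge 4d(2-d-t)$, with $t = d'-d \in [0, 1-d]$. A direct expansion shows that the difference of the two sides factors as $t\bigl[8 - (d+t)(2-d)(4-t)\bigr]$, and the bracket is non-negative because $(d+t)(2-d)(4-t) \le 1 \cdot 2 \cdot 4 = 8$ (using $d+t = d' \le 1$, $d \ge 0$, and $t \le 2$). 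Equality $\rho = 2j$ is attained for diametrically opposite points $y = -x$, confirming that the constant $2$ is optimal.
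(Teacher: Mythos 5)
Your proof is correct, and it is worth noting that the paper itself offers no argument for this lemma at all: it is quoted verbatim from \cite[Lemma 7.56]{avv} and \cite[Lemma 2.41(2)]{vu2}, so your self-contained derivation from the explicit formulas \eqref{sro}--\eqref{hro} is a genuine addition rather than a variant of the paper's route. I checked the details: the half-space bounds via $\sh(\rho_{\mathbb{H}^n}/2)=|x-y|/(2\sqrt{x_ny_n})$, the identity $e^{\rho}-1=2e^{\rho/2}\sh(\rho/2)$, and $\arsh t\le\log(1+2t)$ all work; in the ball, the reduction of $\rho\le 2j$ to $B(2d-A)\le 2d(A-d)$, the use of $B\le 2-d-d'$, and the factorization of $(d+t)(2-d)(2-t)^2-4d(2-d-t)$ as $t\bigl[8-(d+t)(2-d)(4-t)\bigr]$ are all verifiably correct, and the bracket is nonnegative since $0\le d+t=d'\le 1$, $0\le 2-d\le 2$ and $0\le 4-t\le 4$. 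Two cosmetic remarks: the bound $4-t\le 4$ comes from $t\ge 0$, not from the condition $t\le 2$ you cite (which is only needed for nonnegativity and is automatic since $t\le 1-d$); and the appeal to the vertical projection being a hyperbolic contraction can be replaced by the one-line estimate $\ch\rho_{\mathbb{H}^n}(x,y)\ge 1+\tfrac{(x_n-y_n)^2}{2x_ny_n}=\ch\bigl(\log\tfrac{x_n}{y_n}\bigr)$ coming directly from \eqref{cro}, which keeps the argument entirely within the stated formulas. Compared with the cited references, which obtain the inequalities from standard identities for $\th(\rho/2)$ and monotonicity of elementary functions, your proof is more computational in the ball case (the polynomial inequality) but has the advantage of being completely elementary and of exhibiting the equality case $y=-x$ for the constant $\tfrac12$.
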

%=====================================================================================

\medskip

%==========================================================================
\begin{nonsec}{\bf Cassinian oval.}  %\cite{Link}
{\rm
A Cassinian oval $C(\alpha_1, \alpha_2; b)$ is defined as
\begin{equation*}
\{z\in \mathbb{R}^2:|z-\alpha_1|\cdot|z-\alpha_2|=b^{2}\}\,,
\end{equation*}
where $b>0$ and $\alpha_1$, $\alpha_2$ are two fixed points called the foci of the oval.

%Since Cassinian ovals are symmetric under rotations in the axis through the foci, we only study the properties of Cassinian ovals in the plane $\R^2$\,.
Let  $\alpha_1=(-a,0)$ and $\alpha_2=(a,0)$ with $a\in\R$\,,
then the equation of the Cassinian oval $C(\alpha_1, \alpha_2; b)$ is
\begin{equation}\label{ce}
\left((z_1-a)^{2}+z_2^{2}\right)\left((z_1+a)^{2}+z_2^{2}\right)=b^{4}\,.
\end{equation}
}
\end{nonsec}
%============================================================================

%==========================================================================
\begin{figure}[h]
\centering
\includegraphics[width=9cm]{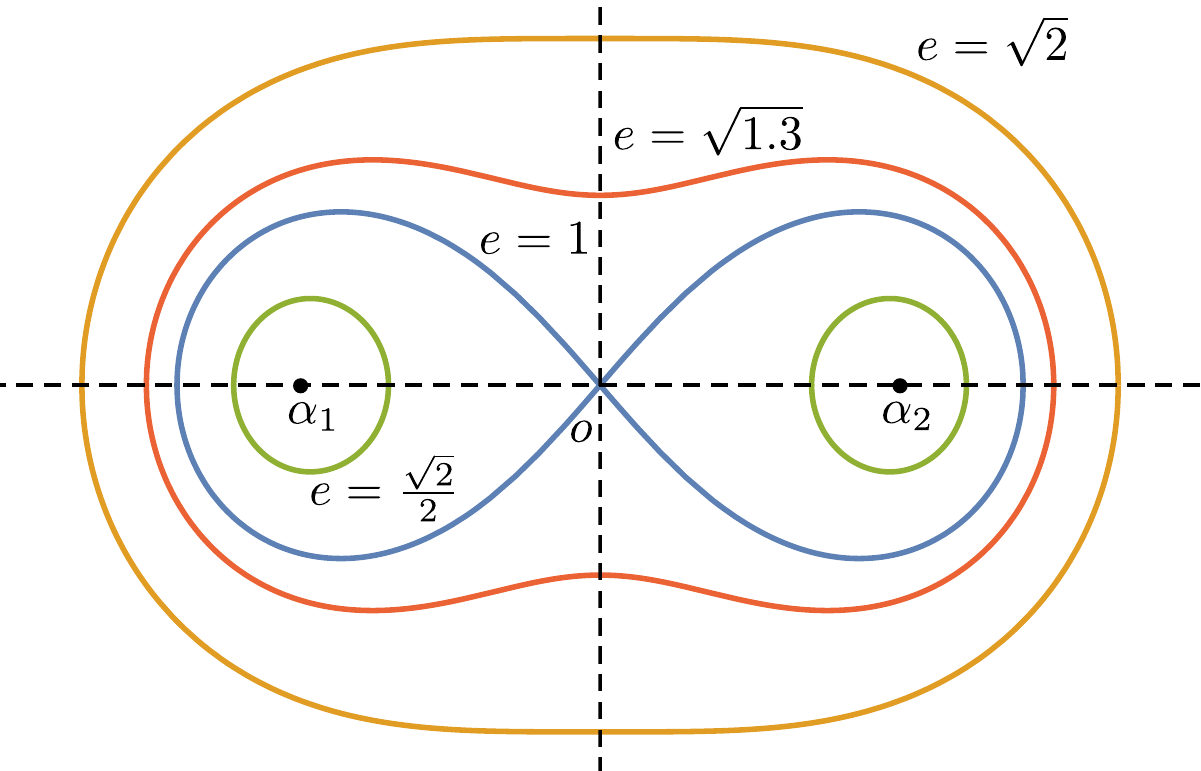}
\caption{\label{tu12}  Cassinian ovals  with $e=\frac{\sqrt{2}}{2}$, $e=1$,  $e=\sqrt{1.3}$, and $e=\sqrt{2}$, respectively.}
\end{figure}
%==========================================================================

The shape of a Cassinian oval  depends on $e=\frac{b}{a}$ (see Fig.\ref{tu12}).
When $e<1$, the oval consists of two separate loops\,.
When $e=1$, the oval is the lemniscate of Bernoulli having the shape of number eight\,.
When $e>1$, the oval is a single loop enclosing both foci.
Moreover, it is peanut-shaped for $1<e<\sqrt{2}$ and convex for $e\geq\sqrt{2}$.
In the limiting case  $a\rightarrow 0$ the Cassinian oval reduces to  a circle.

%==========================================================================
\begin{proposition}\label{prop1}
Let $p=(p_1,p_2)$ be a point on the  Cassinian oval $C(\alpha_1, \alpha_2; b)$.
Then the distance from the origin to the point $p$ is increasing as a function of $p_1>0$.
\end{proposition}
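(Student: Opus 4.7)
The plan is to write $|p|^2 = p_1^2 + p_2^2$ explicitly as a function of $p_1$ alone by exploiting the product structure of the defining equation \eqref{ce}.

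First I would set $r=|p|$ and rewrite each factor of \eqref{ce} by completing to $r^2$:
\begin{equation*}
(p_1-a)^2+p_2^2 = r^2+a^2-2ap_1, \qquad (p_1+a)^2+p_2^2 = r^2+a^2+2ap_1.
\end{equation*}
This is the key observation, because the two factors are of the form $A-B$ and $A+B$ with $A=r^2+a^2$ and $B=2ap_1$, so their product collapses to a difference of squares.

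Next I would substitute into \eqref{ce} to obtain
\begin{equation*}
(r^2+a^2)^2 - 4a^2 p_1^2 = b^4,
\end{equation*}
and solve for $r^2$, using $r^2+a^2 \ge 0$, to get
\begin{equation*}
r^2 = \sqrt{b^4 + 4a^2 p_1^2} - a^2.
\end{equation*}

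Finally I would observe that the right-hand side is manifestly a strictly increasing function of $p_1$ for $p_1>0$ (the quantity under the square root is strictly increasing), hence so is $r=|p|$. There is no real obstacle here; the only step requiring care is recognizing the difference-of-squares structure that eliminates $p_2$ in favor of $r$.
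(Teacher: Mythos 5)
Your proposal is correct and follows essentially the same route as the paper: both derive the explicit formula $|p|^2=\sqrt{b^4+4a^2p_1^2}-a^2$ from \eqref{ce} and read off monotonicity in $p_1>0$. You merely spell out the difference-of-squares step that the paper leaves implicit.
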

%==========================================================================
%==========================================================================
\begin{proof}
By \eqref{ce}, we have
\begin{equation*}
|p|^{2}=p_1^{2}+p_2^{2}
=\sqrt{4a^{2}p_1^{2}+b^{4}}-a^{2},
\end{equation*}
which implies that the distance from the origin to the point $p$ is increasing for $p_1>0$\,.
\end{proof}
%==========================================================================

\medskip

%==========================================================================
\begin{proposition}\label{prop2}
The Cassinian oval $C(\alpha_1, \alpha_2; b)$ inscribes the circle $\partial{\mathbb{B}^{2}}(\sqrt{a^{2}+b^{2}})$\,.
\end{proposition}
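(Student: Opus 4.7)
The plan is to show that the Cassinian oval lies inside the closed disk $\overline{\mathbb{B}^2}(\sqrt{a^2+b^2})$ and touches its boundary circle at the points $(\pm\sqrt{a^2+b^2},0)$. This interpretation fits the word ``inscribes'' since Proposition \ref{prop1} already pins down how $|p|$ depends on $p_1$.

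First, I would reuse the formula derived in the proof of Proposition \ref{prop1}, namely
\begin{equation*}
|p|^2 = \sqrt{4a^2 p_1^2 + b^4} - a^2
\end{equation*}
for every $p=(p_1,p_2)\in C(\alpha_1,\alpha_2;b)$. Because this is monotone increasing in $p_1^2$, bounding $|p|$ reduces to bounding $|p_1|$ over the oval.

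Next, I would find the maximal value of $|p_1|$ on $C(\alpha_1,\alpha_2;b)$. Setting $p_2=0$ in \eqref{ce} gives $(p_1^2-a^2)^2=b^4$, so the extreme values of $p_1$ along the oval's intersection with the $p_1$-axis are $p_1=\pm\sqrt{a^2+b^2}$ (the other root $p_1^2=a^2-b^2$ only occurs in the disconnected case). Since $p_1$ attains its maximum along the oval at a point where $p_2=0$ (a standard Lagrange-type argument, or simply because any other critical point of the coordinate function $p_1$ on the smooth level set must have a horizontal tangent, forcing $p_2=0$), we obtain $p_1^2\le a^2+b^2$ on the whole oval.

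Finally, substituting this bound into the formula for $|p|^2$ yields
\begin{equation*}
|p|^2 \le \sqrt{4a^2(a^2+b^2)+b^4} - a^2 = \sqrt{(2a^2+b^2)^2} - a^2 = a^2+b^2,
\end{equation*}
with equality exactly when $(p_1,p_2)=(\pm\sqrt{a^2+b^2},0)$. Hence the oval lies in $\overline{\mathbb{B}^2}(\sqrt{a^2+b^2})$ and meets the circle $\partial\mathbb{B}^2(\sqrt{a^2+b^2})$ at these two points, proving that it inscribes the circle. I do not expect any real obstacle here; the only subtlety is to justify that the maximum of $p_1$ on the oval is attained on the $p_1$-axis, which in the two-loop case ($b<a$) follows by symmetry of each loop about the $p_1$-axis, and in the connected case is immediate from the same symmetry.
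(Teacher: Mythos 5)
Your argument is correct and follows essentially the same route as the paper: it combines the formula $|p|^2=\sqrt{4a^2p_1^2+b^4}-a^2$ from Proposition \ref{prop1} with the oval equation \eqref{ce} to see that the maximal distance from the origin to the oval is $\sqrt{a^2+b^2}$, attained at $(\pm\sqrt{a^2+b^2},0)$. The only difference is that you spell out (via the Lagrange/tangency observation that the extremum of $p_1$ on the oval forces $p_2=0$) a step the paper dismisses as clear, which is a harmless and indeed welcome elaboration.
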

%==========================================================================
%==========================================================================
\begin{proof}
By Proposition \ref{prop1} and \eqref{ce}, it is clear that
the maximum distance from the origin to the point $p$ on the Cassinian oval $C(\alpha_1, \alpha_2; b)$ is $\sqrt{a^{2}+b^{2}}$\,.
\end{proof}
%==========================================================================

\medskip

%==========================================================================
\begin{nonsec}{\bf Scale invariant Cassinian metric.}\label{def}
{\rm
For a proper subdomain $D$ of $\mathbb{R}^{n}$ and for all $x,y\in D$,
the scale invariant Cassinian metric $\tilde{\tau}_{D}$ is defined as \cite {Ibragimov}
\begin{equation*}
\tilde{\tau}_{D}(x,y)=\log\left(1+\sup_{p\in \partial D}\frac{|x-y|}{\sqrt{|x-p||y-p|}}\right).
\end{equation*}
Geometrically, $\tilde{\tau}_{D}(x,y)$ can be defined by means of the maximal Cassinian oval $C\subset \overline D$ with foci $x,y\in D$.
Then for every point $p\in C$, we have
\begin{equation*}
\tilde{\tau}_{D}(x,y)=\log\left(1+\frac{|x-y|}{\sqrt{|x-p||y-p|}}\right).
\end{equation*}
Because of this geometric interpretation, the  metric $\tilde{\tau}_{D}$ is monotonic with respect to domains, i.e.,
if $D\subset D'$, then $\tilde{\tau}_{D'}(x,y)\leq\tilde{\tau}_{D}(x,y)$ for $x\,,y\in D$.
}
\end{nonsec}
%==========================================================================

The following lemma shows the relation between the scale invariant Cassinian metric and the distance ratio metric.

%=====================================================================================
\begin{lemma}\cite[Theorem 3.3]{Ibragimov}\label{le7}
Let $D\subset\mathbb{R}^{n}$ be a domain with $\partial D\neq\emptyset$.
For all $x,y\in D$, we have
\begin{equation*}
\frac{1}{2}j_{D}(x,y)\leq\tilde{\tau}_{D}(x,y)\leq j_{D}(x,y)
\end{equation*}
and
\begin{equation*}
\tilde{\tau}_{D}(x,y)\leq \frac{1}{2}j_D(x,y)+\frac{1}{2}\log3\,.
\end{equation*}
All the inequalities are sharp.
\end{lemma}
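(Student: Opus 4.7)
The plan is as follows. Fix $x, y \in D$ and assume without loss of generality that $d_x := d(x, \partial D) \leq d(y, \partial D) =: d_y$. Write $r = |x-y|$ and $t = r/d_x$, so that $j_D(x,y) = \log(1+t)$. For any $p \in \partial D$ set $a = |x-p|$ and $b = |y-p|$; the elementary constraints are $a \geq d_x$, $b \geq d_y \geq d_x$, and $a + b \geq r$ by the triangle inequality. The three inequalities reduce to controlling $M := \sup_{p \in \partial D} r/\sqrt{ab}$ in terms of $t$.

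The inequality $\tilde{\tau}_D \leq j_D$ is immediate: from $a, b \geq d_x$ we get $\sqrt{ab} \geq d_x$, so $M \leq t$ and $\tilde{\tau}_D(x,y) \leq \log(1+t) = j_D(x,y)$. For the lower bound $\frac{1}{2} j_D \leq \tilde{\tau}_D$ I would choose $p \in \partial D$ with $|x-p| = d_x$; the triangle inequality then gives $|y-p| \leq r + d_x$, hence $M \geq r/\sqrt{d_x(r+d_x)} = t/\sqrt{1+t}$. The desired estimate $\log(1 + t/\sqrt{1+t}) \geq \frac{1}{2}\log(1+t)$ is equivalent, after squaring and simplifying, to $\sqrt{1+t} \geq 1$, which is trivial.

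The third and most delicate inequality $\tilde{\tau}_D \leq \frac{1}{2} j_D + \frac{1}{2}\log 3$ is equivalent to $(1 + M)^2 \leq 3(1+t)$. Using only the constraints on $(a,b)$, I would prove the pointwise bound $\sqrt{ab} \geq \sqrt{d_x \cdot \max(d_x, r - d_x)}$. Indeed, on the region $d_x \leq a \leq r - d_x$ (nonempty only if $t \geq 2$), $b \geq r - a$ gives $ab \geq a(r-a) \geq d_x(r - d_x)$; on $a > r - d_x$, $b \geq d_x$ yields $ab \geq d_x \cdot \max(d_x, r - d_x)$. Consequently $M \leq t$ if $t \leq 2$, and $M \leq t/\sqrt{t-1}$ if $t > 2$. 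In the first case the algebraic inequality $(1+t)^2 \leq 3(1+t)$ reduces to $t \leq 2$; in the second, setting $s = t - 1 > 1$ turns it into $F(s) := 2s + 3 - 2\sqrt{s} - 2/\sqrt{s} - 1/s \geq 0$, which I would verify by noting $F(1) = 0$ and computing $F'(s) = 2 - s^{-1/2} + s^{-3/2} + s^{-2} > 0$ for $s \geq 1$.

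Finally, sharpness would be exhibited by explicit examples. Taking $D = \mathbb{R}^n \setminus \{0\}$ with antipodal points $x = -y = e_1$ gives $d_x = d_y = 1$, $M = 2$, and both $\tilde{\tau}_D(x,y) = j_D(x,y) = \log 3$ and $\frac{1}{2}j_D(x,y) + \frac{1}{2}\log 3 = \log 3$, realizing equality in both upper-bound inequalities; in the same domain, $x = e_1$ and $y = s e_1$ with $s \to \infty$ gives $\tilde{\tau}_D/j_D \to 1/2$, so the lower bound is also sharp. I expect the main obstacle to be the case analysis for the third inequality, where the two regimes meet exactly at the extremal point $t = 2$, at which the elementary bounds on $\sqrt{ab}$ become tight simultaneously.
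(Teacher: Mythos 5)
Your proposal is correct, and in fact complete. Note that the paper itself offers no proof of this lemma: it is quoted with the citation to Ibragimov's Theorem 3.3, so there is no in-paper argument to compare against; your write-up is an elementary self-contained proof in the same spirit as the original (reduce everything to bounds on $|x-p|\,|y-p|$ using only $|x-p|\ge d_x$, $|y-p|\ge d_x$ and $|x-p|+|y-p|\ge |x-y|$). I checked the key steps: the pointwise bound $|x-p|\,|y-p|\ge d_x\max(d_x,\,|x-y|-d_x)$ holds because the concave function $a\mapsto a(|x-y|-a)$ attains its minimum on $[d_x,\,|x-y|-d_x]$ at the endpoints; the resulting algebraic inequalities $(1+t)^2\le 3(1+t)$ for $t\le 2$ and $F(s)=2s+3-2\sqrt{s}-2/\sqrt{s}-1/s\ge 0$ for $s=t-1\ge 1$ (with $F(1)=0$ and $F'(s)=2-s^{-1/2}+s^{-3/2}+s^{-2}>0$) are verified correctly; the lower bound uses a boundary point nearest to $x$, which exists because $\partial D$ is closed and nonempty, and the reduction to $\sqrt{1+t}\ge 1$ is right; and the sharpness examples in $\mathbb{R}^n\setminus\{0\}$ do give exact equality in both upper bounds at $x=-y=e_1$ (where $t=2$, the case boundary of your analysis, as you anticipated) and ratio tending to $1/2$ for $x=e_1$, $y=se_1$, $s\to\infty$. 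The only stylistic caveat is that attainment of the nearest boundary point deserves the one-line justification just given, since $D$ is only assumed to be a domain with nonempty boundary.
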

%=====================================================================================

\medskip

%==========================================================================
\begin{nonsec}{\bf M\"obius invariant Cassinian metric.}
{\rm
Let $D$ be a subdomain of $\overline{\mathbb{R}}^n$ with ${\rm card} (\partial D)\geq 2$.
For $x,y\in D$, the M\"obius invariant Cassinian metric $\tau_D$ is defined as \cite{I2}
\begin{equation*}
\tau_{D}(x,y)=\log\left(1+\sup_{p,q\in \partial D}\frac{|x-y||p-q|}{\sqrt{|x-p||y-q||x-q||y-p|}}\right).
\end{equation*}
}
\end{nonsec}
%==============================================================================

Since $\tau_D$ can be expressed as
\begin{equation*}
\tau_{D}(x,y)=\log\left(1+\sup_{p,q\in \partial D}\sqrt{\frac{|x-y||p-q|}{|x-p||y-q|}}\sqrt{\frac{|x-y||q-p|}{|x-q||y-p|}}\right)\,,
\end{equation*}
the M\"obius invariant Cassinian metric is M\"obius invariant. Namely,

%==========================================================================================
\begin{lemma}\cite[Corollary 2.1]{I2}\label{lelip1}
Let $f$ be a M\"{o}bius transformation of $\overline{\mathbb{R}}^{n}$  and $D\subset\overline{\mathbb{R}}^{n}$ with ${\rm card} (\partial D)\geq2$.
Then for all $x,y\in D$, we have
\begin{align*}
\tau_{f(D)}\left(f(x),f(y)\right)=\tau_{D}(x,y)\,.
\end{align*}
\end{lemma}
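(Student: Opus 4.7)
The plan is to exploit the factorization of the supremum's argument that the excerpt has already isolated immediately above the statement, namely
$$\frac{|x-y||p-q|}{\sqrt{|x-p||y-q||x-q||y-p|}}=\sqrt{\frac{|x-y||p-q|}{|x-p||y-q|}}\,\sqrt{\frac{|x-y||q-p|}{|x-q||y-p|}}.$$
Each of the two square-root factors is itself an absolute ratio of a quadruple in the convention recalled in the preliminaries: the first equals $\sqrt{|x,p,y,q|}$ and the second equals $\sqrt{|x,q,y,p|}$, since for finite points $|a,b,c,d|=|a-c||b-d|/(|a-b||c-d|)$. Thus the whole expression inside the supremum is $\sqrt{|x,p,y,q|\,|x,q,y,p|}$, i.e.\ a product of two Möbius-invariant quantities.

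With this observation in hand, I would finish the proof in three short steps. First, any Möbius transformation $f$ of $\overline{\mathbb{R}}^n$ is a homeomorphism, so it maps $\partial D$ bijectively onto $\partial f(D)$; hence the substitution $p'=f(p)$, $q'=f(q)$ reparametrises the supremum without altering the set over which it is taken. Second, by the Möbius invariance of the absolute ratio recalled in Section~\ref{section 2} one has $|f(x),f(p),f(y),f(q)|=|x,p,y,q|$, and the same for the companion ratio with $p$ and $q$ swapped. Third, combining these two facts, the argument of the supremum defining $\tau_{f(D)}(f(x),f(y))$ at the boundary pair $(f(p),f(q))$ equals the argument of the supremum defining $\tau_D(x,y)$ at $(p,q)$; taking $\sup$ and then $\log(1+\cdot)$ gives the claimed equality.

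The only subtlety is that one of the points $x,y,p,q$ may be $\infty$, either because $\infty\in D$ or because $\infty\in\partial D$. In that case the Euclidean formula for $|a,b,c,d|$ must be read in its chordal form $q(a,c)q(b,d)/(q(a,b)q(c,d))$ from the preliminaries, which coincides with the Euclidean expression whenever all four points are finite; the factorisation above persists verbatim in chordal terms and the chordal absolute ratio is still Möbius invariant, so the argument carries over unchanged. An alternative route, which keeps the exposition strictly Euclidean, is to precompose with an auxiliary Möbius transformation sending $\infty$ outside $\overline{D}\cup f(\overline{D})$, thereby reducing to the case of finite points. I do not anticipate any real obstacle; the lemma is essentially a bookkeeping consequence of the invariance of the absolute ratio, once the above factorisation is recognised.
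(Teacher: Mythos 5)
Your proposal is correct and follows essentially the same route as the paper, which justifies the lemma (otherwise cited to \cite[Corollary 2.1]{I2}) precisely by the displayed factorization of the supremand into $\sqrt{|x,p,y,q|\,|x,q,y,p|}$ and the M\"obius invariance of the absolute ratio, with $f$ carrying $\partial D$ onto $\partial f(D)$. Your extra care about the point $\infty$ via the chordal form of the absolute ratio is a reasonable elaboration of the same argument rather than a different approach.
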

%==========================================================================================

The following lemma shows the relation between the scale invariant Cassinian metric and the M\"obius invariant Cassinian metric.
%==========================================================================================
\begin{lemma}\cite[Theorem 3.3]{I2}\label{lelip2}
For all $x,y\in D\subsetneq \mathbb{R}^{n}$, we have
\begin{equation*}
 \frac 12\tau_{D}(x,y)\leq \tilde{\tau}_{D}(x,y)\leq \tau_{D}(x,y)\,.
\end{equation*}
\end{lemma}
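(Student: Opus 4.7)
Writing
\[
T_1=\sup_{p\in\partial D}\frac{|x-y|}{\sqrt{|x-p|\,|y-p|}},\qquad T_2=\sup_{p,q\in\partial D}\frac{|x-y|\,|p-q|}{\sqrt{|x-p|\,|y-q|\,|x-q|\,|y-p|}},
\]
one has $\tilde{\tau}_D(x,y)=\log(1+T_1)$ and $\tau_D(x,y)=\log(1+T_2)$. By commutativity of multiplication,
\[
\sqrt{|x-p|\,|y-q|\,|x-q|\,|y-p|}=\sqrt{|x-p|\,|y-p|}\cdot\sqrt{|x-q|\,|y-q|},
\]
so the two inequalities become $T_1\le T_2$ and $1+T_2\le (1+T_1)^2$, i.e.\ $T_2\le T_1^2+2T_1$.

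For the upper bound, use the factorisation
\[
\frac{|x-y|\,|p-q|}{\sqrt{|x-p|\,|y-q|\,|x-q|\,|y-p|}}=\frac{|x-y|}{\sqrt{|x-p|\,|y-p|}}\cdot\frac{|p-q|}{\sqrt{|x-q|\,|y-q|}}.
\]
Fixing $p\in\partial D$, the aim is to produce $q\in\partial D$ (in $\overline{\mathbb R}^n$) with $|p-q|\ge\sqrt{|x-q|\,|y-q|}$, since then $T_2\ge\frac{|x-y|}{\sqrt{|x-p|\,|y-p|}}$ and taking the supremum over $p$ yields $T_2\ge T_1$. When $\infty\in\partial D$, the limit $q\to\infty$ makes the second factor tend to $1$. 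For bounded $D$ a M\"obius transformation $g$ sending a chosen boundary point to $\infty$ renders $g(D)$ unbounded; the limit argument applies in $g(D)$, and M\"obius invariance of $\tau_D$ (Lemma~\ref{lelip1}) transfers the estimate back to $D$.

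For the lower bound, abbreviate $a=|x-p|$, $b=|y-p|$, $c=|x-q|$, $d=|y-q|$, and split into four cases according to whether $a\le b$ or $a\ge b$, and whether $c\le d$ or $c\ge d$. When $a\le b$ and $d\le c$, apply $|p-q|\le a+|x-y|+d$ to obtain
\[
\frac{|x-y|\,|p-q|}{\sqrt{abcd}}\le\frac{|x-y|}{\sqrt{cd}}\sqrt{\tfrac{a}{b}}+\frac{|x-y|^2}{\sqrt{abcd}}+\frac{|x-y|}{\sqrt{ab}}\sqrt{\tfrac{d}{c}}\le T_1+T_1^2+T_1,
\]
using $\sqrt{a/b}\le 1,\ \sqrt{d/c}\le 1$ and the definition of $T_1$. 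The symmetric case $b\le a,\,c\le d$ uses $|p-q|\le b+|x-y|+c$. In the mixed cases $a\le b,\,c\le d$ or $b\le a,\,d\le c$, the simpler bound $|p-q|\le a+c$ or $|p-q|\le b+d$ gives two summands each $\le T_1$, for a total of $\le 2T_1\le T_1^2+2T_1$. Taking the supremum over $p,q\in\partial D$ yields $T_2\le T_1^2+2T_1$.

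The main obstacle is the upper bound in the bounded case, where the direct limit $q\to\infty$ is unavailable and a M\"obius-invariance reduction is required; the lower bound is a routine four-case analysis via triangle inequalities.
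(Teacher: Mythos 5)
Your reduction of both inequalities to $T_1\le T_2$ and $T_2\le T_1^2+2T_1$ is the right frame, and your four-case triangle-inequality proof of $T_2\le T_1^2+2T_1$ is correct (when $D$ is unbounded and $q=\infty\in\partial D$ occurs in the supremum defining $T_2$, the expression degenerates to $|x-y|/\sqrt{|x-p|\,|y-p|}\le T_1$, so that case is harmless too). The genuine gap is in the upper bound for bounded $D$: the M\"obius reduction does not transfer the estimate back. Lemma \ref{lelip1} gives $\tau_{g(D)}(g(x),g(y))=\tau_D(x,y)$, and your limit argument in the unbounded domain $g(D)$ yields $\tilde{\tau}_{g(D)}(g(x),g(y))\le\tau_{g(D)}(g(x),g(y))$; this bounds $\tilde{\tau}_{g(D)}(g(x),g(y))$, not $\tilde{\tau}_D(x,y)$, and $\tilde{\tau}$ is \emph{not} M\"obius invariant, so no inequality between these two quantities is available in the needed direction. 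The only transfer one could invoke, the quasi-invariance of $\tilde{\tau}$ under M\"obius maps (Theorem \ref{moblip}), is both too weak (it would only give $\tilde{\tau}_D\le 2\tau_D$) and circular here, since in this paper that theorem is itself deduced from the lemma you are proving. (Note the paper does not prove the lemma either; it cites \cite{I2}.)

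The bounded case can be closed directly while keeping your factorisation. Fix $p\in\partial D$; by the triangle inequality $|x-y|\le|x-p|+|y-p|$, so after possibly swapping $x$ and $y$ (the relevant quantities are symmetric) we may assume $|x-y|\le 2|x-p|$. On the ray $z_t=x+t(x-p)$, $t\ge0$, one has $|z_t-p|=|x-p|+|z_t-x|$ and $|z_t-y|\le|z_t-x|+|x-y|\le|z_t-x|+2|x-p|$, hence $|z_t-x|\,|z_t-y|\le\bigl(|z_t-x|+|x-p|\bigr)^2=|z_t-p|^2$. Since $x\in D$ and $D$ is bounded, the ray leaves $D$, and its first exit point $q\in\partial D$ satisfies $|p-q|\ge\sqrt{|x-q|\,|y-q|}$; your factorisation then gives $T_2\ge|x-y|/\sqrt{|x-p|\,|y-p|}$, and taking the supremum over $p$ yields $T_1\le T_2$. (The same argument covers unbounded $D$ whenever the ray exits; if it never does, $D$ is unbounded and your $q\to\infty$ case applies, so the M\"obius detour can be dropped altogether.)
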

%==========================================================================================

\bigskip

%%%%%%%%%%%%%%%%%%%%%%%%%%%%%%%%%%%%%%%%%%%%%%%%%%%%%%%%%%%%%%%%%%%%%
%%%%%%%%%%%%%%%%%%%%%%%%%%%%%%%%%%%%%%%%%%%%%%%%%%%%%%%%%%%%%%%%%%%%%
%%%%%%%%%%%%%%%%%%%%%%%%%%%%%%%%%%%%%%%%%%%%%%%%%%%%%%%%%%%%%%%%%%%%%
\section{The estimate of $\tilde{\tau}$-metric}
%%%%%%%%%%%%%%%%%%%%%%%%%%%%%%%%%%%%%%%%%%%%%%%%%%%%%%%%%%%%%%%%%%%%%
%%%%%%%%%%%%%%%%%%%%%%%%%%%%%%%%%%%%%%%%%%%%%%%%%%%%%%%%%%%%%%%%%%%%%
%%%%%%%%%%%%%%%%%%%%%%%%%%%%%%%%%%%%%%%%%%%%%%%%%%%%%%%%%%%%%%%%%%%%%

In this section,
we give the estimate for the scale invariant Cassinian metric in the unit disk or the upper half plane
by studying the formulas of special cases and the geometry  of the $\tilde{\tau}$-metric.
The results can be applied to higher-dimensional cases, e.g.,
the special formulas are used in the proof of the results in Sections \ref{tauro} and \ref{taumob}.
For the convenience, we identify $\R^2$ with the complex plane $\mathbb{C}$ and use complex number notation also if needed in the sequel.

%%%%%%%%%%%%%%%%%%%%%%%%%%%%%%%%%%%%%%%%%%%%%%%%%%%%%%%%%%%%%%%%%%%%%
%%%%%%%%%%%%%%%%%%%%%%%%%%%%%%%%%%%%%%%%%%%%%%%%%%%%%%%%%%%%%%%%%%%%%
\begin{nonsec}{\bf The unit disk case.}
{\rm
We first study the formulas of special cases of the scale invariant Cassinian metric in the unit disk.
}
\end{nonsec}
%%%%%%%%%%%%%%%%%%%%%%%%%%%%%%%%%%%%%%%%%%%%%%%%%%%%%%%%%%%%%%%%%%%%%
%%%%%%%%%%%%%%%%%%%%%%%%%%%%%%%%%%%%%%%%%%%%%%%%%%%%%%%%%%%%%%%%%%%%%

%================================================================
\begin{lemma}\label{lem1}
Let $x,y\in\mathbb{B}^{2}\setminus \{0\}$ with $|x|=|y|$.

(1) If $|x+y|\leq \frac{4|x|^2}{1+|x|^2}$, then
\begin{equation*}
\tilde{\tau}_{\mathbb{B}^2}(x,y)=\log\left( 1+\sqrt{\frac{2|x||x-y|}{1-|x|^2}}\right).
\end{equation*}

(2) If $|x+y|>\frac{4|x|^2}{1+|x|^2}$, then
\begin{equation*}
\tilde{\tau}_{\mathbb{B}^2}(x,y)
=\log\left( 1+\frac{|x-y|}{\sqrt{1+|x|^2-|x+y|}}\right).
\end{equation*}
\end{lemma}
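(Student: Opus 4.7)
The plan is to reduce the computation of $\tilde{\tau}_{\mathbb{B}^2}(x,y)$ to minimizing the product $|x-p|\,|y-p|$ as $p$ ranges over $\partial\mathbb{B}^2$, since
\[
\tilde{\tau}_{\mathbb{B}^2}(x,y) = \log\left(1 + \frac{|x-y|}{\sqrt{\inf_{p\in\partial\mathbb{B}^2}|x-p|\,|y-p|}}\right).
\]
By the rotational invariance of $\tilde{\tau}_{\mathbb{B}^2}$ under orthogonal maps fixing the origin, I may assume $x = re^{i\theta}$ and $y = re^{-i\theta}$ with $r = |x| = |y|$ and $\theta\in[0,\pi/2]$ chosen so that $x+y$ lies on the nonnegative real axis; then $|x-y| = 2r\sin\theta$ and $|x+y| = 2r\cos\theta$.

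Parametrize $p = e^{i\varphi}$ on the unit circle and set $t = \cos\varphi$. Using the identity $\cos(\varphi-\theta)\cos(\varphi+\theta) = \cos^2\varphi - \sin^2\theta$, direct expansion of $|x-p|^2|y-p|^2$ gives the quadratic
\[
h(t) = 4r^2 t^2 - 4r(1+r^2)(\cos\theta)\,t + (1-r^2)^2 + 4r^2\cos^2\theta
\]
on $t\in[-1,1]$. Its vertex lies at $t^{*} = (1+r^2)(\cos\theta)/(2r)$, and the dichotomy $t^{*}\le 1$ versus $t^{*}>1$ translates precisely into $|x+y|\le 4|x|^2/(1+|x|^2)$ versus $|x+y| > 4|x|^2/(1+|x|^2)$. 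This matches the case split of the lemma exactly.

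In Case~(1) the minimum of $h$ on $[-1,1]$ is attained at $t^{*}$, where $h(t^{*}) = (1-r^2)^2 + 4r^2\cos^2\theta - (1+r^2)^2\cos^2\theta$. Via the identity $(1+r^2)^2 - 4r^2 = (1-r^2)^2$ this collapses to $(1-r^2)^2\sin^2\theta$; taking square roots and using $\sin\theta = |x-y|/(2|x|)$ yields $\inf\sqrt{|x-p||y-p|} = \sqrt{(1-|x|^2)|x-y|/(2|x|)}$, and substitution into the definition of $\tilde{\tau}_{\mathbb{B}^2}$ produces the claimed formula. In Case~(2), $h$ is strictly decreasing on $[-1,1]$, so the minimum is at $t=1$, meaning $p=1$; a direct evaluation gives $|x-1|\,|y-1| = 1+r^2 - 2r\cos\theta = 1+|x|^2 - |x+y|$, which yields the second formula.

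The main obstacle is the algebraic simplification in Case~(1): recognising that the three apparently unrelated terms in $h(t^{*})$ collapse to $(1-r^2)^2\sin^2\theta$ depends on the identity $(1+r^2)^2 - 4r^2 = (1-r^2)^2$, and it is precisely this cancellation that makes the resulting expression for $\tilde{\tau}_{\mathbb{B}^2}$ so clean. Once this telescoping is noticed, the remaining steps are routine bookkeeping.
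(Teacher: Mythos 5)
Your proposal is correct and follows essentially the same route as the paper: after a rotation reducing to the symmetric configuration, both minimize the same quadratic in the first coordinate of the boundary point (your $h(t)$ in polar notation is identical to the paper's $f(t)$ with $x_1=r\cos\theta$, $x_2=r\sin\theta$), with the same vertex-location dichotomy giving the two cases. The only difference is cosmetic (complex/polar versus Cartesian coordinates).
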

%================================================================
%================================================================
\begin{proof}
By symmetry, we may assume that $x=(x_1,x_2)$ and $y=(x_1,-x_2)$, where $x_1=\frac{|x+y|}{2},~x_2=\frac{|x-y|}{2}$.
Let $p=(t,\sqrt{1-t^{2}})$ with $x_1\leq t\le1$ and
\begin{align*}
f(t)&=|x-p|^{2}|y-p|^{2}\\
&=\left((x_1^2+x_2^2+1-2x_1 t)-2x_2\sqrt{1-t^2}\right)\left((x_1^2+x_2^2+1-2x_1 t)+2x_2\sqrt{1-t^2}\right)\\
&=4(x_1^{2}+x_2^{2})t^2-4x_1(x_1^{2}+x_2^{2}+1)t+(x_1^{2}+x_2^{2})^2+2x_1^{2}-2x_2^{2}+1\,.
\end{align*}

(1) If $|x+y|\leq \frac{4|x|^2}{1+|x|^2}$, then
\begin{align*}
f_{\min}(t)=f(t_0)
=\frac{x_2^{2}(1-x_1^{2}-x_2^{2})^{2}}{x_1^{2}+x_2^{2}}\,,
\end{align*}
where $t_{0}=\frac{x_1}{2}(1+\frac{1}{x_1^{2}+x_2^{2}})\,.$
Therefore,
\begin{equation*}
\tilde{\tau}_{\mathbb{B}^2}(x,y)
=\log\left(1+\frac{|x-y|}{\sqrt[4]{ f(t_0)}}\right)
=\log\left( 1+\sqrt{\frac{2|x||x-y|}{1-|x|^2}}\right)\,.
\end{equation*}

\medskip

(2) If $|x+y|>\frac{4|x|^2}{1+|x|^2}$, then
$$f_{\min}(t)=f(1)\,.$$
Therefore,
\begin{equation*}
\tilde{\tau}_{\mathbb{B}^2}(x,y)
=\log\left(1+\frac{|x-y|}{\sqrt[4]{ f(1)}}\right)
=\log\left( 1+\frac{|x-y|}{\sqrt{1+|x|^2-|x+y|}}\right).
\end{equation*}

This completes the proof.
\end{proof}
%================================================================

\medskip

%================================================================
\begin{lemma}\label{lem2}\cite[Proposition 3.1]{MS}
Let $x,y\in\mathbb{B}^{2}$ with $x~=~ty$, $ t\in\mathbb{R}\setminus \{0\}$ and $|x|\leq|y|$.

(1) If $t>0$, then
\begin{equation*}\label{fe}
\tilde{\tau}_{\mathbb{B}^2}(x,y)=\log\left(1+\frac{|x-y|}{\sqrt{(1-|x|)(1-|y|)}}\right).
\end{equation*}

(2) If $t<0$, then
\begin{equation*}\label{ee}
\tilde{\tau}_{\mathbb{B}^2}(x,y)=\log\left(1+\frac{|x-y|}{\sqrt{(1+|x|)(1-|y|)}}\right).
\end{equation*}
\end{lemma}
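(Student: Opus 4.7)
The plan is to reduce the supremum in the definition of $\tilde\tau_{\mathbb{B}^2}(x,y)$ to a one-variable minimization of $|x-p|^2|y-p|^2$ over $p\in\partial\mathbb{B}^2$, and then to exploit the fact that $0$, $x$, $y$ are collinear. First I would apply a rotation about the origin (an isometry of $\mathbb{B}^2$ that preserves both $\tilde\tau$ and $\partial\mathbb{B}^2$) so that $x$ and $y$ both lie on the real axis. In case~(1) ($t>0$) this places $x=(a,0)$ and $y=(b,0)$ with $0<a\le b<1$; in case~(2) ($t<0$) it places $x=(-a,0)$ and $y=(b,0)$ with $0<a\le b<1$.

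Parametrising $p=(u,\sqrt{1-u^2})$ with $u=\cos\theta\in[-1,1]$, a direct expansion shows that $g(u):=|x-p|^2|y-p|^2$ is a quadratic in $u$: in case~(1), $g(u)=(1+a^2-2au)(1+b^2-2bu)$, and in case~(2), $g(u)=(1+a^2+2au)(1+b^2-2bu)$. Since $\tilde\tau_{\mathbb{B}^2}(x,y)=\log\bigl(1+|x-y|/\sqrt[4]{\min_{u\in[-1,1]}g(u)}\bigr)$, the formulas will follow once the minimum of $g$ on $[-1,1]$ is identified.

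In case~(1) $g$ has leading coefficient $4ab>0$, hence is convex with vertex at $u^{*}=(a+b)(1+ab)/(4ab)$. Two applications of AM--GM ($a+b\ge 2\sqrt{ab}$ and $1+ab\ge 2\sqrt{ab}$) give $u^{*}\ge 1$, so $g$ is decreasing on $[-1,1]$ and $\min g=g(1)=(1-a)^2(1-b)^2$. In case~(2) the leading coefficient is $-4ab<0$, so $g$ is concave on $[-1,1]$ and its minimum must occur at an endpoint; comparing $g(1)=(1+a)^2(1-b)^2$ with $g(-1)=(1-a)^2(1+b)^2$ via the identity $(1+a)(1-b)-(1-a)(1+b)=2(a-b)\le 0$ shows $g(1)\le g(-1)$, so $\min g=g(1)$. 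In both cases the minimising point on $\partial\mathbb{B}^2$ is $p=e_1$, and inserting $\sqrt{g(1)}$ into the $\tilde\tau$-formula yields the two claimed expressions.

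The main subtlety I anticipate is the convex/concave dichotomy between the two cases: once the symmetry reduction is made, the computation is entirely elementary, but one must notice that the unconstrained critical point of $g$ is a minimum in case~(1) and a maximum in case~(2). This forces an endpoint comparison in case~(2) instead of the direct vertex argument available in case~(1), and explains why the denominator switches from $\sqrt{(1-|x|)(1-|y|)}$ to $\sqrt{(1+|x|)(1-|y|)}$ when the two points move to opposite sides of the origin.
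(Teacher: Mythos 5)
Your proof is correct. Note that the paper does not prove this lemma itself but cites it from Mohapatra--Sahoo \cite[Proposition 3.1]{MS}; your argument supplies a valid self-contained proof, and it follows exactly the same pattern as the paper's own proof of Lemma \ref{lem1}: rotate so the points lie on a line through the origin, parametrize $p\in\partial\mathbb{B}^2$ by its first coordinate $u$ (legitimate since $|x-p|^2=1+|x|^2-2\langle x,p\rangle$ depends only on $u$), and minimize the resulting quadratic $g(u)=|x-p|^2|y-p|^2$ on $[-1,1]$. Your handling of the two cases is sound: the AM--GM bound placing the vertex at $u^*\ge 1$ in the convex case, and the endpoint comparison $g(1)\le g(-1)$ via $(1+a)(1-b)-(1-a)(1+b)=2(a-b)\le 0$ in the concave case, both check out.
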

%================================================================

\medskip

%============================================================================================
\begin{remark}\label{rmk-tau0x}
{\rm
By definition, it is easy to see that
\begin{equation*}
\tilde{\tau}_{\mathbb{B}^2}(0,x)=\log\left(1+\frac{|x|}{\sqrt{1-|x|}}\right).
\end{equation*}
}
\end{remark}
%============================================================================================

\medskip

%================================================================
\begin{lemma}\label{lethm1}
$Let~ x,y,x',y'\in\mathbb{B}^2$  with
$x'=\frac{x+y}{2}-\frac{|x-y|}{2}\xi$,~$y'=\frac{x+y}{2}+\frac{|x-y|}{2}\xi$,
~$x''=\frac{x+y}{2}-\frac{|x-y|}{2}\zeta$ and $y''=\frac{x+y}{2}+\frac{|x-y|}{2}\zeta$\,,
where
\begin{align*}
\xi=
\left\{
\begin{array}{ll}
\frac{x+y}{|x+y|},
& x+y\neq0\,,\\
e_1,
& x+y=0\,,
\end{array}
\right.
\end{align*}
and
$\zeta=i\,\xi$\,.
Then
\begin{equation*}
\tilde{\tau}_{\mathbb{B}^2}(x'',y'')\leq\tilde{\tau}_{\mathbb{B}^2}(x,y)\leq\tilde{\tau}_{\mathbb{B}^2}(x',y')\,.
\end{equation*}
\end{lemma}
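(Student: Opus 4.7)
The plan is to work in real coordinates, using the rotation invariance of $\mathbb{B}^2$ to normalise the configuration. Take $\xi = e_1$, so $\zeta = e_2$, $m := (x+y)/2 = m_0 e_1$ with $m_0 \in [0, 1)$, and $\eta := (y-x)/|y-x| = (\cos\alpha, \sin\alpha)$. Then $x = m - r\eta$, $y = m + r\eta$ with $r := |x-y|/2$, and the extremal configurations are $\alpha = 0$ (giving $x', y'$) and $\alpha = \pi/2$ (giving $x'', y''$). Expanding $|x-p|^2 = |m-p|^2 - 2r\,\eta\cdot(m-p) + r^2$ and similarly for $|y-p|^2$ yields the key identity
\[
|x-p|^2 |y-p|^2 = (|m-p|^2 + r^2)^2 - 4r^2(\eta\cdot(m-p))^2, \quad p \in \mathbb{R}^2.
\]
Since $\tilde{\tau}_{\mathbb{B}^2}(x,y) = \log(1 + 2r/\sqrt{\inf_{p\in\partial\mathbb{B}^2}|x-p||y-p|})$ is strictly decreasing in the infimum, the lemma reduces to the chain $\inf_p|x'-p||y'-p| \le \inf_p|x-p||y-p| \le \inf_p|x''-p||y''-p|$, where $p$ ranges over $\partial\mathbb{B}^2$.

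For the right inequality, Cauchy--Schwarz gives $(\eta\cdot(m-p))^2 \le |m-p|^2$, whence $|x-p|^2|y-p|^2 \ge (|m-p|^2 - r^2)^2$. The reverse triangle inequality gives $|m-p| \ge 1-m_0$ for $p \in \partial\mathbb{B}^2$, and the hypothesis $x', y' \in \mathbb{B}^2$ forces $m_0 + r < 1$. Hence $|x-p||y-p| \ge (1-m_0)^2 - r^2$. This lower bound is attained at $p = e_1$ for the radial pair, so it equals $\inf_p|x'-p||y'-p|$, and the right inequality follows.

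For the left inequality, set $M := m_0^2 + r^2 = |x''|^2$ and apply Lemma~\ref{lem1} to $(x'', y'')$. When $2m_0 > 4M/(1+M)$ (condition of Lemma~\ref{lem1}(2)), $\inf_p|x''-p||y''-p| = (1-m_0)^2 + r^2$; evaluating the key identity at $p = e_1$ gives
\[
|x - e_1|^2 |y - e_1|^2 = ((1-m_0)^2 + r^2)^2 - 4r^2(1-m_0)^2\cos^2\alpha \le ((1-m_0)^2 + r^2)^2,
\]
so $|x - e_1||y - e_1| \le (1-m_0)^2 + r^2$, which proves the inequality.

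The remaining case $2m_0 \le 4M/(1+M)$ (Lemma~\ref{lem1}(1)) is the main obstacle: here $\inf_p|x''-p||y''-p| = r(1-M)/\sqrt{M}$ is attained not at $e_1$ but at $p^\perp_\pm = (u^*, \pm\sqrt{1-(u^*)^2})$ with $u^* = m_0(1+M)/(2M)$, and the simple choice $p = e_1$ no longer suffices. The completion amounts to minimising $G(\theta, \alpha) := (m_0^2 - 2m_0\cos\theta + 1 + r^2)^2 - 4r^2(m_0\cos\alpha - \cos(\theta-\alpha))^2$ in $\theta$ by the same critical-point calculation as in Lemma~\ref{lem1} (now with the extra parameter $\alpha$), and verifying that the resulting infimum does not exceed $r^2(1-M)^2/M$. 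An alternative route is the envelope theorem applied to $h(\alpha) := \inf_\theta G(\theta, \alpha)$, whose derivative at the minimiser $\theta^*(\alpha)$ equals $h'(\alpha) = 8r^2(m_0\cos\alpha - \cos(\theta^*-\alpha))(m_0\sin\alpha + \sin(\theta^*-\alpha))$; showing $h'(\alpha) \ge 0$ on $(0, \pi/2)$ via the critical-point equation for $\theta^*$ is the technical heart of the argument.
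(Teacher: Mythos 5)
Your reduction is sound: normalising $\xi=e_1$, writing $m=\frac{x+y}{2}=m_0e_1$, $r=\frac{|x-y|}{2}$, $\eta=(\cos\alpha,\sin\alpha)$, the identity $|x-p|^2|y-p|^2=(|m-p|^2+r^2)^2-4r^2(\eta\cdot(m-p))^2$ is correct, and so the lemma is indeed equivalent to $\inf_p|x'-p||y'-p|\le\inf_p|x-p||y-p|\le\inf_p|x''-p||y''-p|$ over $p\in\partial\mathbb{B}^2$. Your proof of the right-hand inequality is complete and correct: Cauchy--Schwarz plus $|m-p|\ge 1-m_0>r$ gives $|x-p||y-p|\ge(1-m_0)^2-r^2=|x'-e_1||y'-e_1|\ge\inf_p|x'-p||y'-p|$. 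This is a nice analytic substitute for the paper's containment argument. Likewise, your treatment of the left-hand inequality in the regime of Lemma~\ref{lem1}(2), by testing $p=e_1$, is correct.

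The problem is the remaining regime $2m_0\le\frac{4M}{1+M}$, $M=m_0^2+r^2$, which you explicitly leave as a plan rather than a proof; this is not a cosmetic omission but the core of the lemma. There you must show $\inf_\theta G(\theta,\alpha)\le r^2(1-M)^2/M$ for all $\alpha\in(0,\pi/2)$, and no single fixed test point can do it: at the perpendicular pair's minimiser $p^*=(u^*,\sqrt{1-(u^*)^2})$, $u^*=\frac{m_0(1+M)}{2M}$, your identity gives $|x-p^*|^2|y-p^*|^2-|x''-p^*|^2|y''-p^*|^2=4r^2\bigl[(e_2\cdot(m-p^*))^2-(\eta\cdot(m-p^*))^2\bigr]$, which is strictly positive for small $\alpha$ whenever $u^*-m_0<\sqrt{1-(u^*)^2}$ (e.g.\ $m_0$ small compared with $r^2$), so $|x-p^*||y-p^*|$ then exceeds $\inf_p|x''-p||y''-p|$ and the comparison must use an $\alpha$-dependent boundary point or a genuine monotonicity statement for $h(\alpha)=\inf_\theta G(\theta,\alpha)$. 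You name two routes (explicit minimisation in $\theta$ with the extra parameter $\alpha$, or the envelope computation showing $h'(\alpha)\ge0$) but carry out neither, and you yourself call this "the technical heart"; as written the lower bound $\tilde{\tau}_{\mathbb{B}^2}(x'',y'')\le\tilde{\tau}_{\mathbb{B}^2}(x,y)$ is unproved in general. For comparison, the paper sidesteps this computation geometrically: it takes the maximal Cassinian oval $C(x,y;b)$ tangent to $\partial\mathbb{B}^2$ and uses Propositions~\ref{prop1} and~\ref{prop2} to show that rotating it about the common midpoint into the perpendicular position leaves it strictly inside $\mathbb{B}^2$, whence the maximal parameter for $(x'',y'')$ exceeds $b$; completing your approach would require either reproducing that rotation argument or actually establishing the monotonicity of $h$.
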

%================================================================
%================================================================
\begin{proof}
Since the result is trivially true for the case $x=-y$,  by symmetry, we may assume that $x\neq -y$ and $0<\arg\frac{y-x}{y+x}<\frac{\pi}{2}$.

Let $b'\in\mathbb{R}$ such that $C(x',y';b')$ is tangent to $\partial\mathbb{B}^{2}$ at $\xi$.
By Proposition \ref{prop2}, there exists a disk $\mathbb{B}^{2}(o',r)\subset \mathbb{B}^{2}$
such that $C(x',y';b')$ inscribes $\overline{\mathbb{B}^{2}}(o',r)$,
where the center $o'=\frac{x+y}{2}$ and the radius $r=\sqrt{b'^{2}+\frac{|x-y|^{2}}{4}}$.
Moreover, $C(x',y';b') \cap\partial \mathbb{B}^{2}(o',r) \cap\partial \mathbb{B}^{2}$ has one and only one point.

With rotation, $C(x,y;b')\subsetneq\mathbb{B}^{2}$ (see Fig.~\ref{tu7}).
Therefore, there exists a positive number $b\,(> b')$ such that $C(x,y;b)$ is tangent to $\partial\mathbb{B}^{2}$\,.
Hence
$$\tilde{\tau}_{\mathbb{B}^2}(x,y)\leq\tilde{\tau}_{\mathbb{B}^2}(x',y')\,.$$

With rotation and by Proposition \ref{prop1}, $C(x'',y'';b)\subsetneq\mathbb{B}^{2}$ (see Fig.~\ref{tu8}).
Therefore, there exists a positive number $b''\,(>b)$ such that $C(x'',y'';b'')$ is tangent to $\partial\mathbb{B}^{2}$\,.
Hence
$$\tilde{\tau}_{\mathbb{B}^2}(x'',y'')\leq\tilde{\tau}_{\mathbb{B}^2}(x,y)\,.$$

This completes the proof.
\end{proof}
%============================================================================================

%=========================================
\begin{figure}[h]
\begin{minipage}[t]{0.4\linewidth}
\centering
\includegraphics[width=6.2cm]{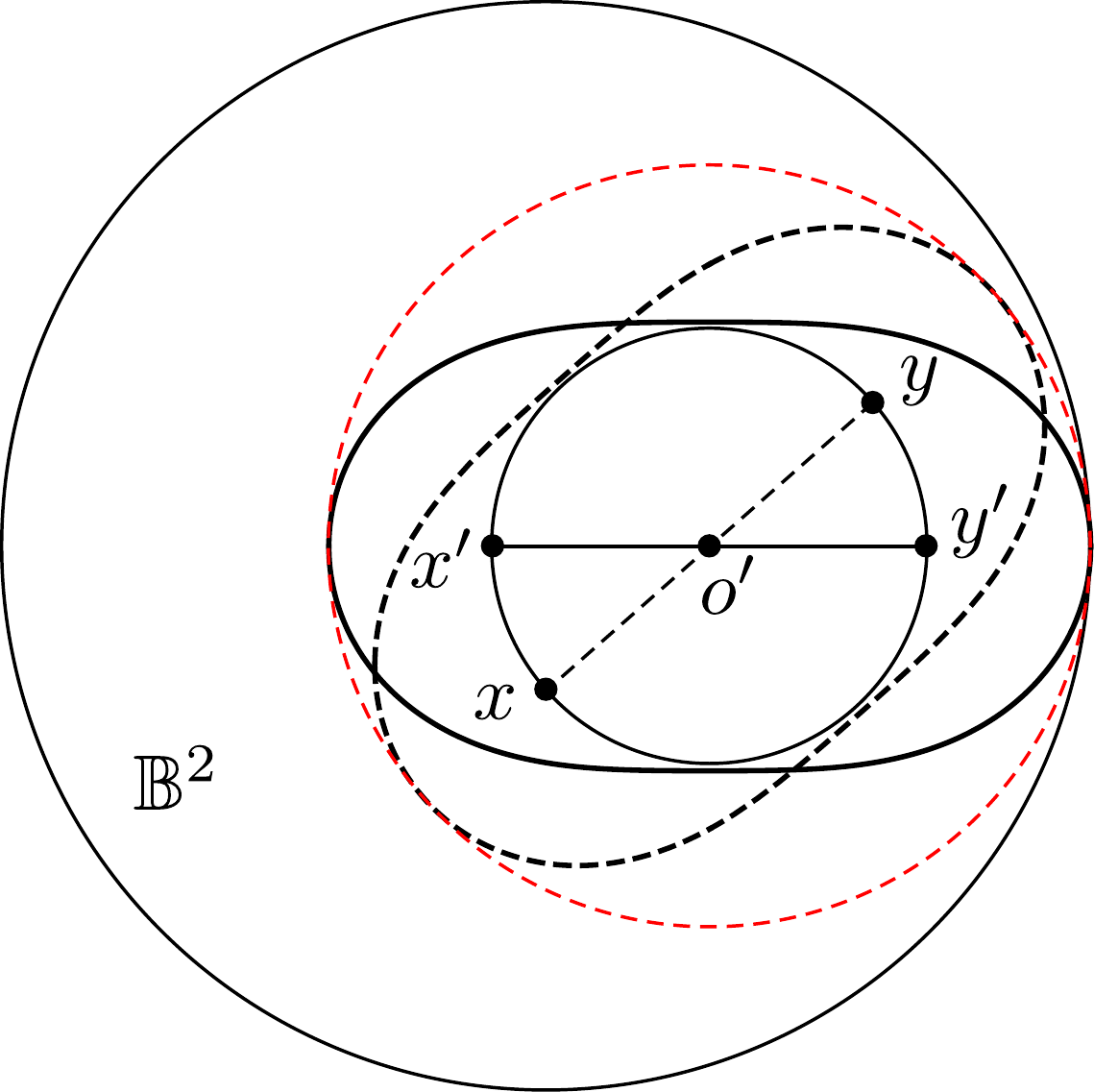}
\caption{\label{tu7} The Cassinian oval $C(x',y'; b')$ is tangent to $\partial\mathbb{B}^{2}$ while $C(x,y; b')\subsetneq\mathbb{B}^{2}$.}
\end{minipage}
%\hfill
\hspace{1.2cm}
\begin{minipage}[t]{0.4\linewidth}
\centering
\includegraphics[width=6.2cm]{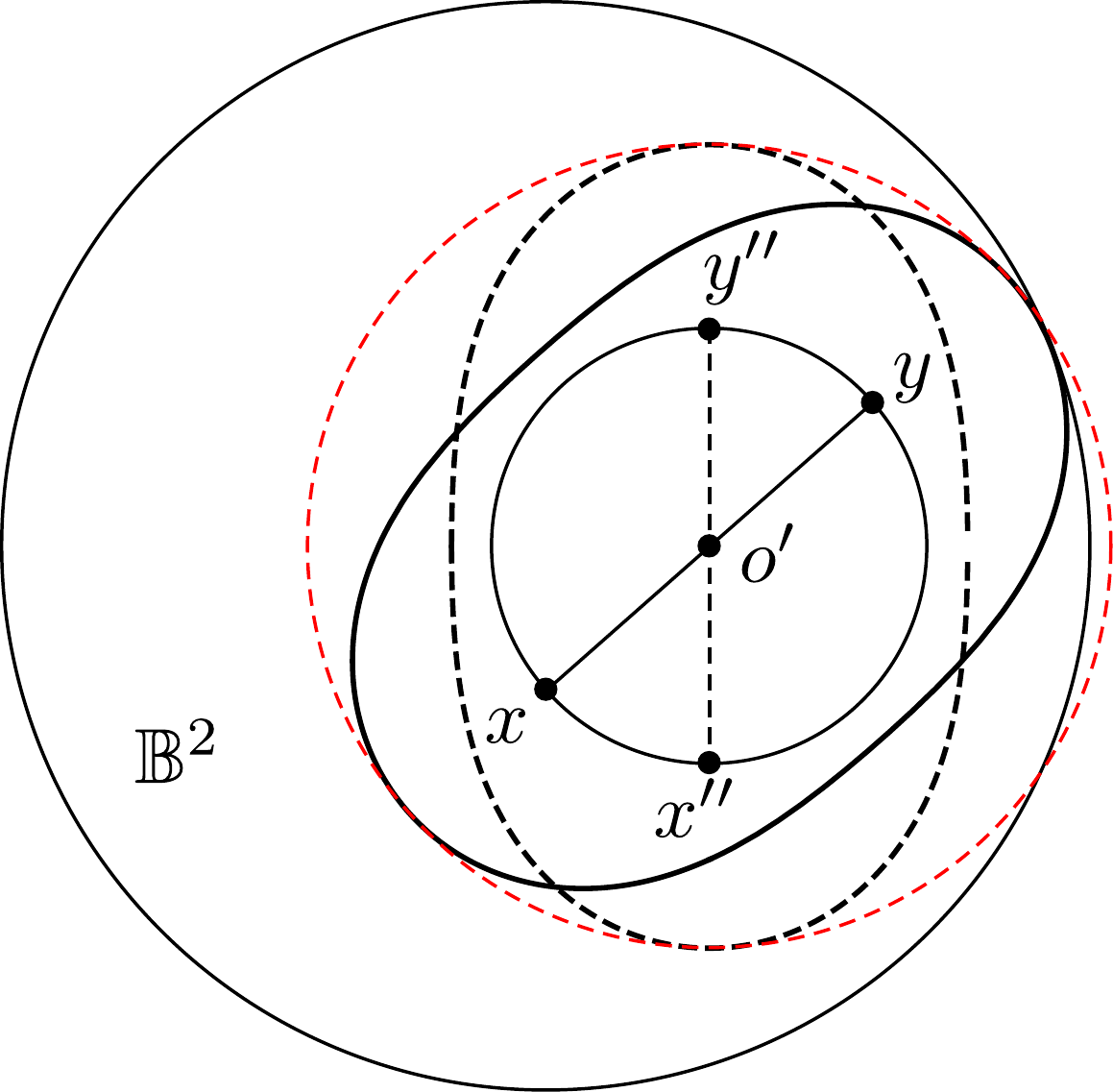}
\caption{\label{tu8} The Cassinian oval $C(x,y; b)$ is tangent to $\partial\mathbb{B}^{2}$ while $C(x'',y''; b)\subsetneq\mathbb{B}^{2}$. }
\end{minipage}
\end{figure}
%=========================================

\medskip

%============================================================================================
\begin{theorem}\label{co1}
For $x,y\in\mathbb{B}^2$, we have
\begin{align}\label{thm1ineq1}
\tilde{\tau}_{\mathbb{B}^2}(x,y)\geq
\left\{
\begin{array}{ll}
\log\left(1+2\sqrt{\frac{|x-y|\sqrt{|x+y|^{2}+|x-y|^{2}}}{4-|x+y|^{2}-|x-y|^{2}}}\right),
& |x+y|\left(1+\frac{4}{|x+y|^{2}+|x-y|^{2}}\right)\le 4\,,\\
\log\left(1+\frac{2|x-y|}{\sqrt{(2-|x+y|)^{2}+|x-y|^{2}}}\right),
& |x+y|\left(1+\frac{4}{|x+y|^{2}+|x-y|^{2}}\right)>4,
\end{array}
\right.
\end{align}
and
\begin{equation}\label{thm1ineq2}
\tilde{\tau}_{\mathbb{B}^2}(x,y)\leq\log\left(1+\frac{2|x-y|}{\sqrt{(2-|x+y|)^{2}-|x-y|^{2}}}\right),
\quad |x+y|+|x-y|<2\,.
\end{equation}
\end{theorem}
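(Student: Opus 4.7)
My plan is to deduce both inequalities from the bracketing provided by Lemma \ref{lethm1}, namely
\[
\tilde{\tau}_{\mathbb{B}^2}(x'',y'')\leq \tilde{\tau}_{\mathbb{B}^2}(x,y)\leq \tilde{\tau}_{\mathbb{B}^2}(x',y'),
\]
by computing the two extremal quantities explicitly via Lemma \ref{lem2} and Lemma \ref{lem1}, respectively. The core observation is that both $(x',y')$ and $(x'',y'')$ are highly symmetric: $x',y'$ are collinear with the origin (in the $\xi$-direction), while $x'',y''$ are mirror images of each other about the line through $0$ and $\tfrac{x+y}{2}$, hence have equal modulus. These are exactly the hypotheses required by Lemma \ref{lem2} and Lemma \ref{lem1}.

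For the upper bound \eqref{thm1ineq2}, I use $x+y=|x+y|\xi$ to rewrite
\[
x'=\tfrac{|x+y|-|x-y|}{2}\,\xi,\qquad y'=\tfrac{|x+y|+|x-y|}{2}\,\xi,
\]
so $x',y'$ lie on a line through the origin with $|y'|=\tfrac{|x+y|+|x-y|}{2}$; the hypothesis $|x+y|+|x-y|<2$ is exactly what ensures $x',y'\in\mathbb{B}^2$. Lemma \ref{lem2} then applies, and the identity
\[
4(1-|x'|)(1-|y'|)=(2-|x+y|)^2-|x-y|^2
\]
(together with the analogous identity $4(1+|x'|)(1-|y'|)=(2-|x+y|)^2-|x-y|^2$ in the case $|x+y|<|x-y|$) converts the resulting formula directly into the right-hand side of \eqref{thm1ineq2}.

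For the lower bound \eqref{thm1ineq1}, since $\zeta\perp\xi$ and $\tfrac{x+y}{2}=\tfrac{|x+y|}{2}\xi$, a quick computation gives
\[
|x''|=|y''|=\tfrac{1}{2}\sqrt{|x+y|^2+|x-y|^2},\quad |x''-y''|=|x-y|,\quad |x''+y''|=|x+y|,
\]
placing $(x'',y'')$ into the setting of Lemma \ref{lem1}. Clearing denominators in the dichotomy $|x''+y''|\leq \tfrac{4|x''|^2}{1+|x''|^2}$ unravels to exactly the threshold $|x+y|\bigl(1+\tfrac{4}{|x+y|^2+|x-y|^2}\bigr)\leq 4$ appearing in the statement. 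Substituting the moduli into the two formulas of Lemma \ref{lem1} and simplifying via
\[
1-|x''|^2=\tfrac{4-|x+y|^2-|x-y|^2}{4},\qquad 1+|x''|^2-|x''+y''|=\tfrac{(2-|x+y|)^2+|x-y|^2}{4},
\]
produces the two branches of \eqref{thm1ineq1}. Automatic containment $x'',y''\in\mathbb{B}^2$ follows from the parallelogram identity $|x+y|^2+|x-y|^2=2|x|^2+2|y|^2<4$.

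I anticipate no serious obstacle: the proof is driven entirely by the geometric reduction of Lemma \ref{lethm1}, and everything else is algebraic bookkeeping in the orthogonal frame $(\xi,\zeta)$. The only point deserving care is checking that both the $t>0$ and $t<0$ branches of Lemma \ref{lem2} collapse to the same simplified denominator for $(x',y')$, so that the sign of $|x+y|-|x-y|$ does not force an unwanted case split in the upper-bound argument.
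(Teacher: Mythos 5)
Your proposal is correct and follows essentially the same route as the paper: reduce via Lemma \ref{lethm1} to the symmetric pairs $(x'',y'')$ and $(x',y')$, then evaluate with Lemma \ref{lem1} and Lemma \ref{lem2}; your threshold computation and the identities such as $4(1\pm|x'|)(1-|y'|)=(2-|x+y|)^2-|x-y|^2$ and $1+|x''|^2-|x''+y''|=\tfrac{(2-|x+y|)^2+|x-y|^2}{4}$ match the paper's Cases 1--5 exactly. The only detail the paper handles separately (its Case 3) is the degenerate situation $|x+y|=|x-y|$, where $x'=0$ and Lemma \ref{lem2} formally requires $t\neq 0$; there one instead uses Remark \ref{rmk-tau0x}, which gives the same formula, so this is a trivial addition to your argument.
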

%============================================================================================
%============================================================================================
\begin{proof}
Since the equalities in \eqref{thm1ineq1} and \eqref{thm1ineq2} clearly hold when $x=-y$, we may assume that $x\neq-y$ in the sequel.

To prove inequalities \eqref{thm1ineq1}, let $x''\,,y''$ be the same as in  Lemma \ref{lethm1}.
Then
$$x''=\frac{x+y}{2\,|x+y|}\left( |x+y| -i\, |x-y| \right)\quad{\rm and}\quad y''=\frac{x+y}{2\,|x+y|}\left(|x+y| +i\, |x-y|\right)\,.$$

\medskip

{\bf Case 1.} If $|x+y|\left(1+\frac{4}{|x+y|^{2}+|x-y|^{2}}\right)\le 4$, then ${|x''+y''|}\le \frac{4|x''|^2}{1+|x''|^2}$.
By Lemma \ref{lethm1} and Lemma \ref{lem1}(1), we have
\begin{equation*}
\tilde{\tau}_{\mathbb{B}^2}(x,y)\geq\tilde{\tau}_{\mathbb{B}^2}(x'',y'')
=\log\left(1+2\sqrt{\frac{|x-y|\sqrt{|x+y|^{2}+|x-y|^{2}}}{4-|x+y|^{2}-|x-y|^{2}}}\right)\,.
\end{equation*}

\medskip

{\bf Case 2.} If $|x+y|\left(1+\frac{4}{|x+y|^{2}+|x-y|^{2}}\right)> 4$, then ${| x''+y'' |}> \frac{4|x''|^2}{1+|x''|^2}$.
By Lemma \ref{lethm1} and Lemma \ref{lem1}(2), we have
\begin{equation*}
\tilde{\tau}_{\mathbb{B}^2}(x,y)\geq\tilde{\tau}_{\mathbb{B}^2}(x'',y'')
 =\log\left(1+\frac{2|x-y|}{\sqrt{(2-|x+y|)^{2}+|x-y|^{2}}}\right)\,.
\end{equation*}

\medskip

To prove inequality \eqref{thm1ineq2}, let $x'\,,y'$ be the same as in  Lemma \ref{lethm1}.
Then
$$x'=\frac{x+y}{2|x+y|}\left( |x+y|-|x-y|\right)\quad {\rm and}\quad y'=\frac{x+y}{2|x+y|}\left(|x+y|+|x-y|\right)\,.$$
It is easy to see that $x'=ty'$ and $|x'|\leq|y'|$.

\medskip

{\bf Case 3.} If $t=0$, then $|x'|=0$ and hence $|x+y|=|x-y|$.
By Lemma \ref{lethm1} and Remark \ref{rmk-tau0x}, it is clear that inequality \eqref{thm1ineq2} holds.

\medskip

{\bf Case 4.} If $t>0$, then $|x'|=\frac12(|x+y|- |x-y|)$ and $|y'|=\frac12(|x+y|+|x-y|)$\,.
By Lemma \ref{lethm1} and Lemma \ref{lem2}(1), we have
\begin{equation*}
\tilde{\tau}_{\mathbb{B}^2}(x,y)\le\tilde{\tau}_{\mathbb{B}^2}(x',y')
=\log\left(1+\frac{2|x-y|}{\sqrt{(2-|x+y|)^{2}-|x-y|^{2}}}\right).
\end{equation*}
\medskip

{\bf Case 5.} If $t<0$, then $|x'|=\frac12(|x-y|- |x+y|)$ and $|y'|=\frac12(|x-y|+|x+y|)$\,.
By Lemma \ref{lethm1} and Lemma \ref{lem2}(2), a similar argument as Case 4 yields the result.

\medskip

This completes the proof.
\end{proof}
%============================================================================================

\medskip

%============================================================================================
\begin{remark}
{\rm
Let
$$f(t,s)\equiv\log\left(1+2\sqrt{\frac{s\sqrt{s^{2}+t^{2}}}{4-s^{2}-t^{2}}}\right),$$
where $t=|x+y|\in [0,2)$ and $s=|x-y|\in [0,2)$\,.

Since $f(t,s)$ is increasing in $t$, then $f(t,s)\geq f(0,s)=\log\left(1+\frac{2s}{\sqrt{4-s^{2}}}\right).$
Hence
\begin{equation*}
\log\left(1+2\sqrt{\frac{|x-y|\sqrt{|x+y|^{2}+|x-y|^{2}}}{4-|x+y|^{2}-|x-y|^{2}}}\right)
\geq\log\left(1+\frac{2|x-y|}{\sqrt{4-|x-y|^{2}}}\right)\,.
\end{equation*}

Moreover, since $|x+y|\left(1+\frac{4}{|x+y|^{2}+|x-y|^{2}}\right)>4$ implies that $t^2+s^2<\frac{4t}{4-t}$\,,
we have
\begin{equation*}
(2-|x+y|)^2+|x-y|^2-(4-|x-y|^2)=2(t^2+s^2)-t^2-4t<\frac{t(t^2-8)}{4-t}\le 0
\end{equation*}
and hence
\begin{equation*}
\log\left(1+\frac{2|x-y|}{\sqrt{(2-|x+y|)^{2}+|x-y|^{2}}}\right)
\geq\log\left(1+\frac{2|x-y|}{\sqrt{4-|x-y|^{2}}}\right)\,.
\end{equation*}

Therefore, the lower estimate of $\tilde{\tau}_{\mathbb{B}^2}$ in Theorem \ref{co1} is better than that in \cite[Theorem 3.2]{MS}.
}
\end{remark}
%============================================================================================

\bigskip

%%%%%%%%%%%%%%%%%%%%%%%%%%%%%%%%%%%%%%%%%%%%%%%%%%%%%%%%%%%%%%%%%%%%%
%%%%%%%%%%%%%%%%%%%%%%%%%%%%%%%%%%%%%%%%%%%%%%%%%%%%%%%%%%%%%%%%%%%%%
\begin{nonsec}{\bf The upper half plane case.}
{\rm
We get two formulas for $\tilde{\tau}_{{\mathbb{H}}^{2}}$ in two special cases
in the similar way as in \cite{i1} for calculating the Cassinian metric in the upper half plane. %$c_{{\mathbb{H}}^{2}}$\,.}
For the references of the Cassinian metric, we refer to \cite{hkvz, imsz, kms}. }
\end{nonsec}
%%%%%%%%%%%%%%%%%%%%%%%%%%%%%%%%%%%%%%%%%%%%%%%%%%%%%%%%%%%%%%%%%%%%%
%%%%%%%%%%%%%%%%%%%%%%%%%%%%%%%%%%%%%%%%%%%%%%%%%%%%%%%%%%%%%%%%%%%%%

%=====================================================================================
\begin{lemma}\label{lem3}
Let $x,y\in\mathbb{H}^{2}$ and $d(x,\partial\mathbb{H}^{2})=d(y,\partial\mathbb{H}^{2})=d$.

(1) If $ |x-y|>2\,d$, then
\begin{equation*}\label{he}
\tilde{\tau}_{{\mathbb{H}}^{2}}(x,y)
=\log\left(1+\sqrt{\frac{|x-y|}{d}}\right).
\end{equation*}

(2) If $ |x-y| \leq 2\, d$, then
\begin{equation*}\label{ge}
\tilde{\tau}_{{\mathbb{H}}^{2}}(x,y)=\log\left(1+\frac{2|x-y|}{\sqrt{4\,d^{2}+|x-y|^{2} }}\right).
\end{equation*}
\end{lemma}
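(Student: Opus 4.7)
The plan is to exploit the translation invariance of $\tilde{\tau}_{\mathbb{H}^2}$ along $\partial \mathbb{H}^2$ to normalize the positions of $x$ and $y$, and then reduce the supremum in the definition of $\tilde{\tau}$ to a one-variable minimization problem. Since both points have the same height $d$, I would put $x=(-c,d)$ and $y=(c,d)$ where $c=|x-y|/2$, so that the midpoint lies on the imaginary axis.

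The quantity $\sup_{p\in\partial\mathbb{H}^2}\frac{|x-y|}{\sqrt{|x-p||y-p|}}$ is realized by minimizing $g(t):=|x-p|^2|y-p|^2$ with $p=(t,0)$. I would expand
\begin{equation*}
g(t)=\bigl((t+c)^2+d^2\bigr)\bigl((t-c)^2+d^2\bigr)=(t^2+c^2+d^2)^2-4c^2t^2,
\end{equation*}
and, after substituting $u=t^2\ge 0$, obtain $f(u)=(u+c^2+d^2)^2-4c^2u$ whose critical point is $u_0=c^2-d^2$. The case split in the statement then corresponds exactly to the sign of $u_0$.

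In case (1), $|x-y|>2d$ means $c>d$ so $u_0>0$ is admissible, and a direct evaluation gives $f(u_0)=4c^2d^2$, i.e.\ $\sqrt{|x-p||y-p|}=\sqrt{2cd}=\sqrt{|x-y|\,d}$, yielding the first formula. In case (2), $|x-y|\le 2d$ gives $u_0\le 0$, so the minimum on $u\ge 0$ is attained at $u=0$ (i.e.\ $p$ being the foot of the perpendicular from the midpoint onto $\partial\mathbb{H}^2$), where $f(0)=(c^2+d^2)^2$. Hence $\sqrt{|x-p||y-p|}=c^2+d^2=(|x-y|^2+4d^2)/4$, giving the second formula.

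There is no real obstacle here beyond bookkeeping: everything follows from a routine elementary calculus argument once the coordinates are chosen. The only point requiring care is the geometric interpretation of the threshold $|x-y|=2d$, i.e.\ checking that the critical point of $g$ sits on $\partial\mathbb{H}^2$ precisely when $c\ge d$; this matches the picture of the extremal Cassinian oval touching $\partial\mathbb{H}^2$ either at two symmetric points (when $|x-y|>2d$, corresponding to a peanut- or two-loop oval) or tangentially at the single midpoint projection $(0,0)$ (when $|x-y|\le 2d$, a convex oval).
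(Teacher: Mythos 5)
Your proof is correct and follows essentially the same route as the paper: translate so that $x=(-c,d)$, $y=(c,d)$ with $c=|x-y|/2$, minimize $f=|x-p|^2|y-p|^2$ over $p=(t,0)$, and split cases according to whether the critical point $t^2=c^2-d^2$ is admissible, exactly as in the paper's computation. Only note the small slip in case (2): $f(0)=(c^2+d^2)^2$ gives $|x-p||y-p|=c^2+d^2$, hence $\sqrt{|x-p||y-p|}=\sqrt{c^2+d^2}=\tfrac12\sqrt{|x-y|^2+4d^2}$ (not $c^2+d^2$ itself), which then yields the stated formula.
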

%=====================================================================================
%=====================================================================================
\begin{proof}
Since $\tilde{\tau}_{{\mathbb{H}}^{2}}$ is invariant under translations,
we may assume that $x=(x_{1},x_{2})$ and $y=(-x_{1},x_{2})$,
where $x_{1}=\frac{|x-y|}{2}$ and $x_{2}=d(x,\partial\mathbb{H}^{2})=d(y,\partial\mathbb{H}^{2})=d$.
Let $p=(t,0)$ with $t\ge 0$ and
\begin{align*}
f(t)&=|x-p|^2|y-p|^2 \\
&=t^{4}-2(x_{1}^{2}-x_{2}^{2})t^{2}+(x_{1}^{2}+x_{2}^{2})^2\,.
\end{align*}

\medskip

{\bf Case 1.}
If $ |x-y|>2\,d$, then $x_{1}>x_{2}$ and hence
$$f_{\min}(t)=f({t_{0}})=4x_{1}^{2}x_{2}^{2}\,,$$
where ${t_{0}}=\sqrt{x_{1}^{2}-x_{2}^{2}}$.
Therefore,
\begin{equation*}
\tilde{\tau}_{{\mathbb{H}}^{2}}(x,y)
=\log\left(1+\frac{|x-y|}{\sqrt[4]{f({t_{0}})}}\right)
=\log\left(1+\sqrt{\frac{|x-y|}{d}}\right)\,.
\end{equation*}

\medskip

{\bf Case 2.}
If $ |x-y| \leq 2\, d$, then $x_{1}\leq x_{2}$ and hence
$$f_{\min}(t)=f(0)=(x_{1}^{2}+x_{2}^{2})^{2}\,.$$
Therefore,
\begin{equation*}
\tilde{\tau}_{{\mathbb{H}}^{2}}(x,y)
=\log\left(1+\frac{|x-y|}{\sqrt[4]{f(0)}}\right)
=\log\left(1+\frac{2|x-y|}{\sqrt{4\, d^{2}+|x-y|^{2}}}\right).
\end{equation*}

The proof is complete.
\end{proof}
%=====================================================================================

\medskip

%=====================================================================================
\begin{lemma}\label{lem4}
Let $x,y\in\mathbb{H}^{2}$ with $y-x$ be orthogonal to $\partial\mathbb{H}^{2}$.
Then
\begin{align*}
\tilde{\tau}_{{\mathbb{H}}^{2}}(x,y)=\log\left( 1+\frac{|x-y|}{\sqrt{d(x,\partial \mathbb{H}^{2} )d(y,\partial \mathbb{H}^{2} )}}\right).
\end{align*}
\end{lemma}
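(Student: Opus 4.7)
The plan is to reduce to a normalized configuration using the similarity invariance of $\tilde{\tau}_{\mathbb{H}^2}$, and then carry out the same kind of explicit minimization of the product $|x-p|^{2}|y-p|^{2}$ over $p\in\partial\mathbb{H}^{2}$ that was used in Lemma \ref{lem3}. Since $y-x$ is orthogonal to $\partial\mathbb{H}^{2}$, a horizontal translation moves $x$ and $y$ to the vertical axis, so I would assume $x=(0,x_{2})$ and $y=(0,y_{2})$ with $x_{2},y_{2}>0$, in which case $d(x,\partial\mathbb{H}^{2})=x_{2}$, $d(y,\partial\mathbb{H}^{2})=y_{2}$, and $|x-y|=|x_{2}-y_{2}|$.

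Next, parametrizing an arbitrary boundary point as $p=(t,0)$, I would compute
\begin{equation*}
f(t)=|x-p|^{2}|y-p|^{2}=(t^{2}+x_{2}^{2})(t^{2}+y_{2}^{2}).
\end{equation*}
Both factors are strictly increasing in $t^{2}$, so $f$ attains its minimum on $\R$ at $t=0$, with $f(0)=x_{2}^{2}y_{2}^{2}$. Hence the supremum in the definition of $\tilde{\tau}_{\mathbb{H}^{2}}(x,y)$ is attained at the foot $p=(0,0)$ and equals $|x-y|/\sqrt{x_{2}y_{2}}$, which yields the claimed formula.

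There is essentially no obstacle here: the geometry is very rigid because $x$ and $y$ sit on a common vertical line, so the optimal Cassinian focus on $\partial\mathbb{H}^{2}$ is forced to be their common foot, and monotonicity of $f$ in $t^{2}$ does all the work. The only mild care needed is to invoke similarity (translation) invariance of $\tilde{\tau}_{D}$ at the very beginning, since without that reduction the expressions for $|x-p|$ and $|y-p|$ are less symmetric and obscure the trivial minimization.
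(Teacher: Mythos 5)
Your argument is correct and matches the paper, which simply states that the formula "follows easily from the definition": your translation reduction and the minimization of $f(t)=(t^{2}+x_{2}^{2})(t^{2}+y_{2}^{2})$ at $t=0$ is exactly the routine verification the authors leave implicit, in the same spirit as their proof of Lemma \ref{lem3}. No issues.
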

%=====================================================================================
%=====================================================================================
\begin{proof}
The proof follows easily from the definition of $\tilde{\tau}$-metric.
\end{proof}
%=====================================================================================

\medskip

%=====================================================================================
\begin{lemma}\label{lethm2}
Let $x,y,x',y'\in\mathbb{H}^2 $ with $x'=\frac{x+y}{2}-\frac{|x-y|}{2}e_{2}$,~$y'=\frac{x+y}{2}+\frac{|x-y|}{2}e_{2}$,
~$x''=\frac{x+y}{2}-\frac{|x-y|}{2}e_{1}$ and $y''=\frac{x+y}{2}+\frac{|x-y|}{2}e_{1}$\,.
Then
\begin{align*}
\tilde{\tau}_{\mathbb{\mathbb{H}}^2}(x'',y'')\leq\tilde{\tau}_{\mathbb{H}^2}(x,y)\leq\tilde{\tau}_{\mathbb{H}^2}(x',y').
\end{align*}
\end{lemma}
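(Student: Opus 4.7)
The plan is to let $a = |x-y|/2$ and $m = (x+y)/2$, and to let $b$, $b'$, $b''$ denote the (positive) parameters for which $C(x,y;b)$, $C(x',y';b')$, $C(x'',y'';b'')$ are tangent to $\partial\mathbb{H}^2$, i.e., $b^2 = \min_{p\in\partial\mathbb{H}^2}|p-x||p-y|$ and similarly for $b'$, $b''$. Since $|x-y|=|x'-y'|=|x''-y''|=2a$, the lemma reduces to proving the chain $b'\leq b\leq b''$, because $\tilde{\tau}_{\mathbb{H}^2}=\log(1+2a/b_\ast)$ is a decreasing function of $b_\ast$. Note that the hypothesis $x',y'\in\mathbb{H}^2$ forces $m_2>a$, which is used below.

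For the upper bound $b\geq b'$, I would invoke Proposition~\ref{prop2}: $C(x,y;b)$ lies inside the closed disk $\overline{\mathbb{B}^2}(m,\sqrt{a^2+b^2})$. Because this oval is tangent to $\partial\mathbb{H}^2$, the disk must meet $\partial\mathbb{H}^2$, so $\sqrt{a^2+b^2}\geq m_2$, giving $b^2\geq m_2^2-a^2$. On the other hand, for the vertical configuration, a direct minimization of
\[
|(m_1+t,0)-x'|^2\,|(m_1+t,0)-y'|^2 = (t^2+m_2^2+a^2)^2-4a^2m_2^2
\]
over $t\in\mathbb{R}$ is attained at $t=0$ (the derivative $4t(t^2+m_2^2+a^2)$ vanishes only there) and equals $(m_2^2-a^2)^2$, so $b'^2=m_2^2-a^2$. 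Hence $b\geq b'$.

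For the lower bound $b\leq b''$, I would simply evaluate $|p-x||p-y|$ at the convenient boundary point $p=(m_1,0)$. Writing $y-x=2a(\cos\theta,\sin\theta)$ and expanding,
\[
|(m_1,0)-x|^2\,|(m_1,0)-y|^2 = (a^2+m_2^2)^2-4a^2m_2^2\sin^2\theta \leq (a^2+m_2^2)^2,
\]
so $b^2\leq a^2+m_2^2$. For the horizontal configuration, the analogous minimization
\[
\min_{t\in\mathbb{R}}\bigl[(t^2+a^2+m_2^2)^2-4a^2t^2\bigr] = (a^2+m_2^2)^2,
\]
attained at $t=0$ because $m_2>a$ makes $t^2+m_2^2-a^2>0$ on $t\neq 0$, shows $b''^2=a^2+m_2^2$. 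Therefore $b\leq b''$. No step should be particularly delicate: the only slightly non-routine ingredient is the inscribed-disk property of Proposition~\ref{prop2}, and everything else reduces to two short one-variable polynomial minimizations together with a straightforward expansion at the point $(m_1,0)$.
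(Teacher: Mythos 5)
Your argument is correct, and it reaches the conclusion by a more computational route than the paper. Both proofs reduce the lemma to comparing the parameters $b'\le b\le b''$ of the maximal Cassinian ovals with foci $x',y'$, $x,y$, $x'',y''$ inscribed in $\overline{\mathbb{H}^2}$, and both use Proposition \ref{prop2} at the key point; but the paper's mechanism is a rotation argument: the tangent oval $C(x',y';b')$ inscribes, by Proposition \ref{prop2}, a disk centered at $\frac{x+y}{2}$ lying in $\mathbb{H}^2$ and meets $\partial\mathbb{H}^2$ only at one point of that disk's boundary, so rotating it about the center into the position of the foci $x,y$ leaves it strictly inside $\mathbb{H}^2$, forcing $b>b'$; a second rotation combined with Proposition \ref{prop1} gives $b''>b$. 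You instead compute $b'^2=m_2^2-a^2$ and $b''^2=m_2^2+a^2$ explicitly by one-variable minimizations (valid exactly because the hypothesis $x'\in\mathbb{H}^2$ gives $m_2>a$, which you correctly flag; these computations in effect re-derive the special-case formulas of Lemmas \ref{lem3}(2) and \ref{lem4}), bound $b$ from below by combining Proposition \ref{prop2} with the fact that the maximal oval actually meets $\partial\mathbb{H}^2$ (the minimum over the boundary line is attained), and bound $b$ from above by evaluating the distance product at the single boundary point $(m_1,0)$. What your route buys is explicit values and the elimination of both the figure-supported rotation step and Proposition \ref{prop1} (your point evaluation replaces it), arguably making the proof more self-contained and rigorous; what the paper's route buys is a shorter, purely geometric picture that treats the two inequalities symmetrically. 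One cosmetic slip: the displayed quantity $(a^2+m_2^2)^2-4a^2m_2^2\sin^2\theta$ is a product of squared distances, so it bounds $b^4$ rather than $b^2$; the conclusion $b^2\le a^2+m_2^2$ is of course unaffected.
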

%=====================================================================================
%=====================================================================================
\begin{proof}
By symmetry, we may assume that $0<\arg (y-x)<\frac{\pi}{2}$.

Let $b'\in\mathbb{R}$ such that $C(x',y';b')$  is tangent  to $\partial\mathbb{H}^{2}$.
By Proposition \ref{prop2}, there exists a disk $\mathbb{B}^{2}(o',r)\subset \mathbb{H}^2$
such that $C(x',y';b')$ inscribes $\overline{\mathbb{B}^{2}}(o',r)$,
where the center $o'=\frac{x+y}{2}$ and the radius $r=\sqrt{b'^{2}+\frac{|x-y|^{2}}{4}}$.
Moreover, $C(x',y';b') \cap\partial \mathbb{B}^{2}(o',r) \cap\partial \mathbb{H}^{2}$ has one and only one point.

With rotation, $C(x,y;b')\subsetneq\mathbb{H}^{2}$ (see Fig.~\ref{tu10}).
Therefore, there exists a positive number $b\,(> b')$ such that $C(x,y;b)$  is tangent  to $\partial\mathbb{H}^{2}$.
Hence
$$\tilde{\tau}_{\mathbb{H}^{2}}(x,y)\leq\tilde{\tau}_{\mathbb{H}^{2}}(x',y'). $$

With rotation and by Proposition \ref{prop1}, $C(x'',y'';b)\subsetneq\mathbb{H}^{2}$ (see Fig.~\ref{tu11}).
Therefore, there exists a positive number $b''\,(> b)$ such that $C(x'',y'';b'')$  is tangent to $\partial\mathbb{H}^{2}$.
Hence
$$\tilde{\tau}_{\mathbb{H}^2}(x'',y'')\leq\tilde{\tau}_{\mathbb{H}^2}(x,y).$$

This completes the proof.
\end{proof}
%==========================================================================================

%===============================================================
\begin{figure}[h]
\begin{minipage}[t]{0.4\linewidth}
\centering
\includegraphics[width=6.92cm]{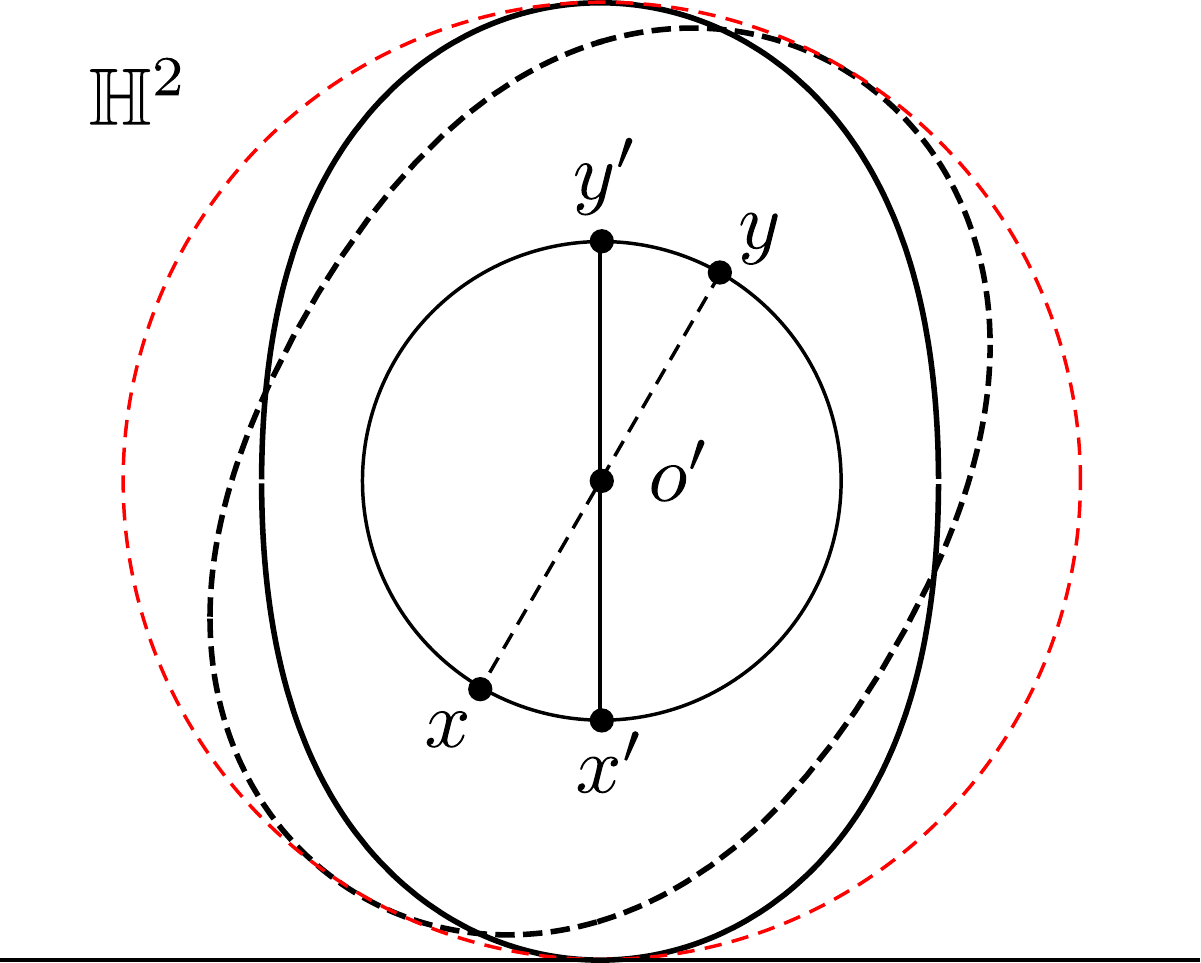}
\caption{\label{tu10} The Cassinian oval $C(x',y';b')$  is tangent  to $\partial\mathbb{H}^{2}$ while $C(x,y;b')\subsetneq\mathbb{H}^{2}$.}
\end{minipage}
%\hfill
\hspace{2.2cm}
\begin{minipage}[t]{0.4\linewidth}
\centering
\includegraphics[width=6.9cm]{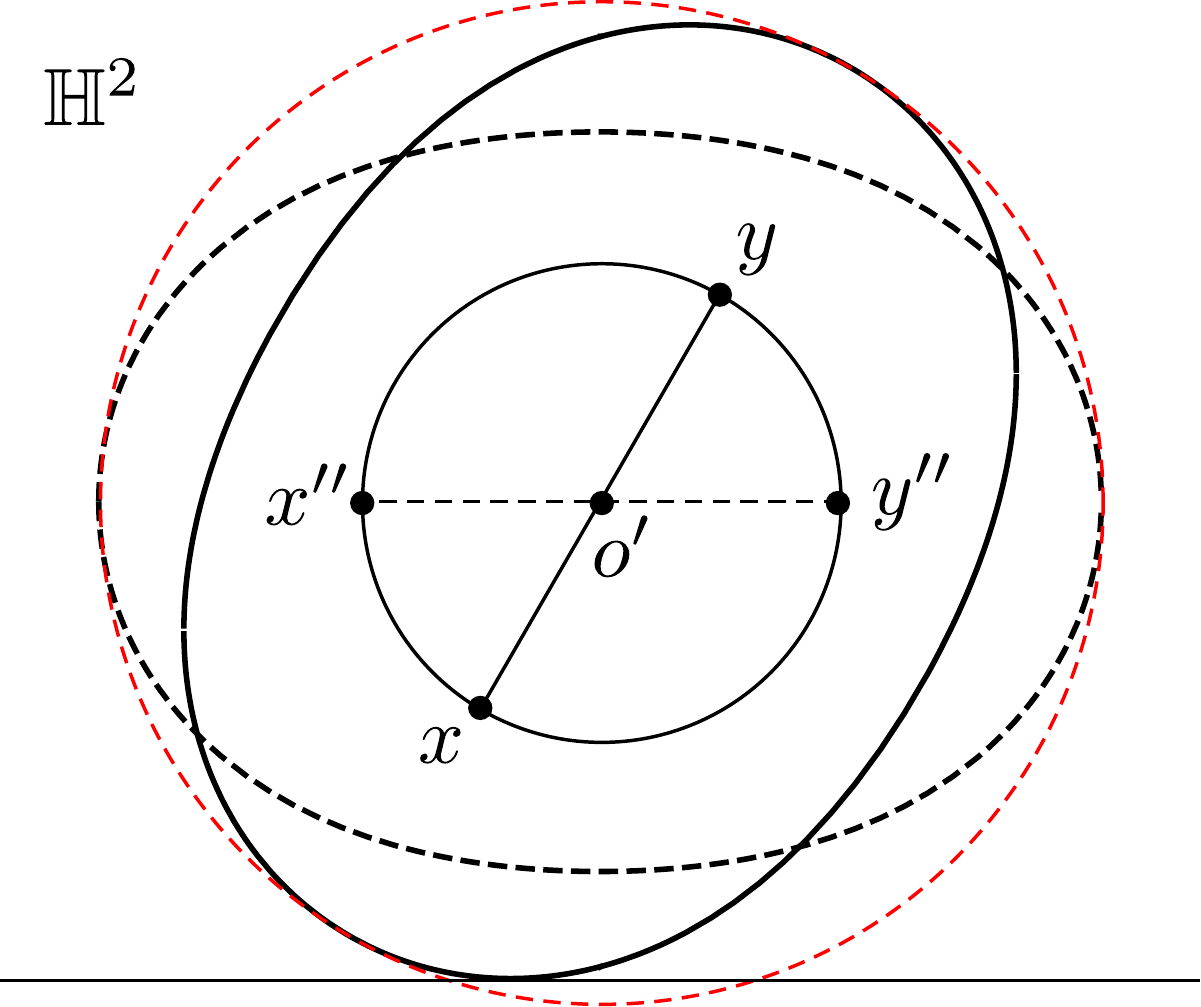}
\caption{\label{tu11} The Cassinian oval $C(x,y;b)$  is tangent  to $\partial\mathbb{H}^{2}$ while $C(x'',y'';b)\subsetneq\mathbb{H}^{2}$. }
\end{minipage}
\end{figure}
%==============================================================

%======================================================================================
\begin{theorem}\label{co3}
Let $x,y\in\mathbb{H}^2 $ and $d=d(\frac{x+y}{2},\partial\mathbb{\mathbb{H}}^{2})$.
Then
\begin{align}\label{thm2ineq2}%\label{thm2ineq1}
\tilde{\tau}_{{\mathbb{H}}^{2}}(x,y)\geq
\left\{
\begin{array}{ll}
\log\left(1+\sqrt{\frac{|x-y|}{d}}\right),
& |x-y| > 2\,d\,,\\
\log\left(1+\frac{2\,|x-y|}{\sqrt{4\,d^{2}+|x-y|^{2}}}\right),
& |x-y| \leq 2\,d,
\end{array}
\right.
\end{align}
and
\begin{equation}\label{thm2ineq3}
\tilde{\tau}_{\mathbb{H}^2}(x,y)\leq \log\left(1+\frac{2|x-y|}{\sqrt{4\, d^{2}- |x-y|^{2} }}\right)\,,
\quad |x-y|<2d\,.
\end{equation}
\end{theorem}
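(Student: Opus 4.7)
The plan is to repeat the strategy used in the proof of Theorem \ref{co1}, but with Lemma \ref{lethm2} replacing Lemma \ref{lethm1} and with the half-space formulas (Lemmas \ref{lem3} and \ref{lem4}) replacing the ball formulas (Lemmas \ref{lem1} and \ref{lem2}). The rotation lemma already does the geometric heavy lifting; essentially all that remains is to evaluate the two extremal configurations explicitly.

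For the lower bound \eqref{thm2ineq2}, I would apply Lemma \ref{lethm2} to obtain $\tilde{\tau}_{\mathbb{H}^2}(x'',y'')\le\tilde{\tau}_{\mathbb{H}^2}(x,y)$, where $x''$ and $y''$ are obtained from $(x+y)/2$ by translating along the horizontal direction $e_1$. Since $(x+y)/2$ lies at Euclidean height $d$ above $\partial\mathbb{H}^2$, both $x''$ and $y''$ lie at height $d$, so $d(x'',\partial\mathbb{H}^2)=d(y'',\partial\mathbb{H}^2)=d$ and $|x''-y''|=|x-y|$. Lemma \ref{lem3} then gives the explicit value of $\tilde{\tau}_{\mathbb{H}^2}(x'',y'')$ in each of the two cases $|x-y|>2d$ and $|x-y|\le 2d$, which is exactly the right-hand side of \eqref{thm2ineq2}.

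For the upper bound \eqref{thm2ineq3}, I would use the other half of Lemma \ref{lethm2}: $\tilde{\tau}_{\mathbb{H}^2}(x,y)\le \tilde{\tau}_{\mathbb{H}^2}(x',y')$, where now $x'$ and $y'$ are obtained from $(x+y)/2$ by translating along $e_2$, the direction orthogonal to $\partial\mathbb{H}^2$. Under the hypothesis $|x-y|<2d$, both points lie in $\mathbb{H}^2$ with
\[
d(x',\partial\mathbb{H}^2)=d-\tfrac{|x-y|}{2},\qquad d(y',\partial\mathbb{H}^2)=d+\tfrac{|x-y|}{2},
\]
and $y'-x'$ is orthogonal to $\partial\mathbb{H}^2$. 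Hence Lemma \ref{lem4} gives
\[
\tilde{\tau}_{\mathbb{H}^2}(x',y')=\log\!\left(1+\frac{|x-y|}{\sqrt{d^2-|x-y|^2/4}}\right)=\log\!\left(1+\frac{2|x-y|}{\sqrt{4d^2-|x-y|^2}}\right),
\]
which is exactly the right-hand side of \eqref{thm2ineq3}.

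There is no real obstacle here — Lemma \ref{lethm2} does all the geometric work by reducing the problem to the two extremal configurations (segment parallel and segment perpendicular to $\partial\mathbb{H}^2$), and both of those admit closed-form $\tilde{\tau}$ values from Lemmas \ref{lem3} and \ref{lem4}. The only minor care is to verify that $|x-y|<2d$ ensures $x',y'\in\mathbb{H}^2$ so that Lemma \ref{lem4} applies, and to algebraically simplify $\sqrt{d^2-|x-y|^2/4}$ into $\tfrac{1}{2}\sqrt{4d^2-|x-y|^2}$ to match the stated form.
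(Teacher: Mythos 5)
Your proposal is correct and matches the paper's proof essentially step for step: the paper also applies Lemma \ref{lethm2} to reduce to the two extremal configurations, then uses Lemma \ref{lem3} (with both points at height $d$) for the lower bound and Lemma \ref{lem4} (with $d(x',\partial\mathbb{H}^2)=d-\tfrac{|x-y|}{2}$, $d(y',\partial\mathbb{H}^2)=d+\tfrac{|x-y|}{2}$) for the upper bound. Nothing is missing.
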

%=====================================================================================
%=====================================================================================
\begin{proof}
To prove inequalities \eqref{thm2ineq2}, let $x''\,,y''$ be the same as in Lemma \ref{lethm2}.
The results follow from Lemma \ref{lethm2} and Lemma \ref{lem3} immediately.

\medskip

To prove inequality \eqref{thm2ineq3}, let $x'\,,y'$ be the same as in Lemma \ref{lethm2}.
Since $d(x',\partial\mathbb{H}^{2})=d-\frac{|x-y|}{2}$ and  $d(y',\partial\mathbb{H}^{2})=d+\frac{|x-y|}{2}$,
together with Lemma \ref{lethm2} and Lemma \ref{lem4}, we have
\begin{align*}
\tilde{\tau}_{\mathbb{H}^2}(x,y)\leq\tilde{\tau}_{\mathbb{H}^2}(x',y')
&=\log\left(1+\frac{|x-y|}{\sqrt{d(x',\partial\mathbb{H}^{2})\,d(y',\partial\mathbb{H}^{2})}}\right)\\
&=\log\left(1+\frac{2|x-y|}{\sqrt{4\,d^2-|x-y|^2}}\right).
\end{align*}

The proof is complete.
\end{proof}
%=====================================================================================

\bigskip

%%%%%%%%%%%%%%%%%%%%%%%%%%%%%%%%%%%%%%%%%%%%%%%%%%%%%%%%%%%%%%%%%%%%%
%%%%%%%%%%%%%%%%%%%%%%%%%%%%%%%%%%%%%%%%%%%%%%%%%%%%%%%%%%%%%%%%%%%%%
%%%%%%%%%%%%%%%%%%%%%%%%%%%%%%%%%%%%%%%%%%%%%%%%%%%%%%%%%%%%%%%%%%%%%
\section{The $\tilde{\tau}$-metric and the hyperbolic metric}\label{tauro}
%%%%%%%%%%%%%%%%%%%%%%%%%%%%%%%%%%%%%%%%%%%%%%%%%%%%%%%%%%%%%%%%%%%%%
%%%%%%%%%%%%%%%%%%%%%%%%%%%%%%%%%%%%%%%%%%%%%%%%%%%%%%%%%%%%%%%%%%%%%
%%%%%%%%%%%%%%%%%%%%%%%%%%%%%%%%%%%%%%%%%%%%%%%%%%%%%%%%%%%%%%%%%%%%%

In \cite{Ibragimov}, Ibragimov showed the relation between the scale invariant Cassinian metric and the hyperbolic metric in the unit ball,
while a statement about the sharpness of comparison was missing.
In this section, we will provide the missing sharpness statement and study the same property in the upper half space.

%============================================================================================
\begin{theorem}\label{trb}
For all $x,y\in\mathbb{B}^{n}$, we have
\begin{equation}\label{trb1}
\frac{1}{4}\rho_{\mathbb{B}^{n}}(x,y)
\leq\tilde{\tau}_{\mathbb{B}^n}(x,y)
\leq\rho_{\mathbb{B}^{n}}(x,y),
\end{equation}
and both the inequalities are sharp.
In addition, for all $x,y\in\mathbb{B}^{n}$, we have
\begin{equation}\label{trb2}
\tilde{\tau}_{\mathbb{B}^n}(x,y)
\leq\frac{1}{2}\rho_{\mathbb{B}^{n}}(x,y)+\log\frac{5}{4},
\end{equation}
and the inequality is sharp.
\end{theorem}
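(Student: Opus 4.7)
The plan for the two-sided bound \eqref{trb1} is to chain together results already at hand. The right halves of Lemma \ref{le7} and Lemma \ref{le8} give $\tilde\tau_{\mathbb{B}^n}\le j_{\mathbb{B}^n}\le\rho_{\mathbb{B}^n}$, while the left halves give $\tilde\tau_{\mathbb{B}^n}\ge\tfrac{1}{2}j_{\mathbb{B}^n}\ge\tfrac{1}{4}\rho_{\mathbb{B}^n}$. For sharpness of the constant $1$ on the right, I would let both points shrink to the same boundary point along a common radius: take $x=se_1$, $y=(s-\delta)e_1$ with $\delta=o(1-s)$ and $s\to 1^-$; Lemma \ref{lem2}(1) together with \eqref{bro} shows that $\tilde\tau$ and $\rho$ are both equivalent to $\delta/(1-s)$, so their ratio tends to $1$. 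For sharpness of the constant $1/4$ on the left, the extremal configuration is antipodal: with $y=-x$ and $|x|\to 1^-$, Lemma \ref{lem2}(2) yields $\tilde\tau_{\mathbb{B}^n}(x,-x)=\log(1+2|x|/\sqrt{1-|x|^2})\sim\tfrac{1}{2}\log\bigl(1/(1-|x|)\bigr)$, whereas \eqref{bro} gives $\rho_{\mathbb{B}^n}(x,-x)=2\log\bigl((1+|x|)/(1-|x|)\bigr)\sim 2\log\bigl(1/(1-|x|)\bigr)$, so $\tilde\tau/\rho\to 1/4$.

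For the refined estimate \eqref{trb2} the chain above is too coarse, since Lemma \ref{le7} combined with Lemma \ref{le8} only yields $\tilde\tau\le\tfrac{1}{2}\rho+\tfrac{1}{2}\log 3$ and $\tfrac{1}{2}\log 3>\log\tfrac{5}{4}$. Instead, I would bound $\tilde\tau$ against $\rho$ directly, through the hyperbolic identity \eqref{sro}. For any $p\in\partial\mathbb{B}^n$ the triangle inequality gives $|x-p|\ge 1-|x|$ and $|y-p|\ge 1-|y|$, whence
\begin{equation*}
\tilde\tau_{\mathbb{B}^n}(x,y)\le\log\left(1+\frac{|x-y|}{\sqrt{(1-|x|)(1-|y|)}}\right).
\end{equation*}
Substituting $|x-y|=\sqrt{(1-|x|^2)(1-|y|^2)}\sinh(\rho/2)$ from \eqref{sro} and using $(1+|x|)(1+|y|)\le 4$, this reduces to
\begin{equation*}
\tilde\tau_{\mathbb{B}^n}(x,y)\le\log\bigl(1+2\sinh(\rho/2)\bigr),
\end{equation*}
where $\rho=\rho_{\mathbb{B}^n}(x,y)$. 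The proof then boils down to the one-variable inequality $\log(1+2\sinh u)\le u+\log(5/4)$ for $u\ge 0$; with $w=e^u$, this rearranges to $(w-2)^2\ge 0$, with equality precisely at $u=\log 2$.

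For sharpness of \eqref{trb2} all three intermediate estimates must be simultaneously tight: $\rho=2\log 2$ (forcing $w=2$), both $|x|$ and $|y|$ approaching $1$ (forcing $(1+|x|)(1+|y|)=4$), and $x,y$ on a common radius so that the infimum of $|x-p||y-p|$ over $p\in\partial\mathbb{B}^n$ equals $(1-|x|)(1-|y|)$. A clean extremal family is $x=(1-4\epsilon)e_1$, $y=(1-\epsilon)e_1$ with $\epsilon\to 0^+$: by \eqref{bro} one has $\rho_{\mathbb{B}^n}(x,y)=\log\bigl(4(2-\epsilon)/(2-4\epsilon)\bigr)\to 2\log 2$, whereas Lemma \ref{lem2}(1) gives $\tilde\tau_{\mathbb{B}^n}(x,y)=\log(5/2)$ for every $\epsilon\in(0,1/4)$, and $\tfrac{1}{2}\cdot 2\log 2+\log\tfrac{5}{4}=\log(5/2)$, so equality is attained in the limit.

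The non-routine step is recognising the identity $(e^{\rho/2}-2)^2\ge 0$, which pins down both the sharp constant $\log\tfrac{5}{4}$ and the radial extremal $\rho=2\log 2$; estimating $\tilde\tau$ through $j$ unavoidably loses information about the simultaneous boundary behaviour of $x$ and $y$, whereas the direct passage through $\sinh(\rho/2)$ is just precise enough to capture it.
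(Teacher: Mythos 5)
Your argument is correct, and the sharpness part coincides with the paper's: the antipodal family $y=-x$, $|x|\to1^-$ for the constant $\tfrac14$, a radial pair approaching the boundary with $|x-y|=o(1-|x|)$ for the constant $1$, and a radial pair with boundary distances in ratio $4:1$ (the paper takes $x=(t+(1-t)^2)e_1$, $y=(t+(1-t)^5)e_1$, $t\to0$, which is your $\epsilon$-family in a less transparent parametrization) for the constant $\log\tfrac54$; all limits check out against Lemma \ref{lem2} and \eqref{bro}. Where you genuinely diverge is in the inequalities themselves: the paper simply cites Ibragimov's Theorem 3.8 for both \eqref{trb1} and \eqref{trb2}, whereas you prove \eqref{trb1} by chaining Lemmas \ref{le7} and \ref{le8} (exactly how the paper handles the half-space analogue \eqref{a}) and prove \eqref{trb2} directly via \eqref{sro}: bounding $\inf_{p\in\partial\mathbb{B}^n}|x-p||y-p|\ge(1-|x|)(1-|y|)$, using $(1+|x|)(1+|y|)\le4$ to get $\tilde\tau\le\log\bigl(1+2\sinh(\rho/2)\bigr)$, and reducing to $(e^{\rho/2}-2)^2\ge0$. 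This is precisely the mechanism the paper deploys only in the half-space case, where $\sqrt{2(\ch\rho-1)}=2\sh(\rho/2)$ and the same elementary inequality $1+e^{u}-e^{-u}\le\tfrac54e^{u}$ appears; transplanting it to the ball costs you the extra factor $\sqrt{(1+|x|)(1+|y|)}\le2$ but buys a self-contained proof that also explains where the extremal configuration $\rho=2\log2$ with both points tending to the boundary comes from, which the paper's citation leaves opaque. Your observation that the $j$-metric chain only gives the additive constant $\tfrac12\log3>\log\tfrac54$ is also correct and justifies the need for the direct estimate.
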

%============================================================================================
%============================================================================================
\begin{proof}
For the inequalities see \cite[Theorem 3.8]{Ibragimov}.

For the sharpness of the left-hand side of inequalities \eqref{trb1},
let $x=-y=t e_{1}$ with $t\in(0,1)$.
By Lemma \ref{lem2}(2) and \eqref{bro},
we have
\begin{equation*}
\lim_{t\rightarrow 1^{-}}\frac{\tilde{\tau}_{\mathbb{B}^n}(x,y)}{\rho_{\mathbb{B}^{n}}(x,y)}
=\lim_{t\rightarrow 1^{-}}\frac{\log\left(1+\frac{2t}{\sqrt{1-t^{2}}}\right)}{2\log\left(1+\frac{2t}{1-t}\right)}
=\frac{1}{2}\lim_{t\rightarrow1^{-}}\frac{\log\left(\frac{2t}{\sqrt{1-t^{2}}}\right)}{\log\left(\frac{2t}{1-t}\right)}
=\frac{1}{4}\,.
\end{equation*}

For the sharpness of the right-hand side of inequalities \eqref{trb1},
let $x=te_{1}$ and $y=(t+(1-t)^{2})e_{1}$ with $t\in(0,1)$.
By Lemma \ref{lem2}(1) and \eqref{bro}, we have
\begin{equation*}
\lim_{t\rightarrow 1^{-}}\frac{\tilde{\tau}_{\mathbb{B}^n}(x,y)}{\rho_{\mathbb{B}^{n}}(x,y)}
=\lim_{t\rightarrow 1^{-}}\frac{\log\left(1+\frac{1-t}{\sqrt{t}}\right)}{\log\left(1+\frac{2(1-t)}{t(1+t)}\right)}
=\lim_{t\rightarrow 1^{-}}\frac{\sqrt{t}(1+t)}{2}
=1\,.
\end{equation*}

\medskip

For the sharpness of  inequality \eqref{trb2}, let $x=(t+(1-t)^{2})e_{1}$ and $y=(t+(1-t)^{5})e_{1}$.
By Lemma \ref{lem2}(1) and \eqref{bro}, we have
\begin{equation*}
\lim_{t\rightarrow 0}\tilde{\tau}_{\mathbb{B}^n}(x,y)
=\lim_{t\rightarrow 0}\log\left(1+\frac{(1-t)(3-3t+t^{2})}
{\sqrt{4-6t+4t^{2}-t^{3}}}\right)
=\log\frac{5}{2}
\end{equation*}
and
\begin{equation*}
\lim_{t\rightarrow 0}\rho_{\mathbb{B}^{n}}(x,y)
=\lim_{t\rightarrow 0}\log\left(\frac{1+t+(1-t)^{2}}{1+t+(1-t)^{5}}(4-6t+4t^{2}-t^{3})\right)
=\log 4\,.
\end{equation*}
Hence
\begin{equation*}
\lim_{t\rightarrow 0}\left(\tilde{\tau}_{\mathbb{B}^n}(x,y)
-\frac{1}{2}\rho_{\mathbb{B}^{n}}(x,y)\right)=\log \frac54\,.
\end{equation*}

This completes the proof.
\end{proof}
%============================================================================================

\medskip

The following theorem shows the analogue of Theorem \ref{trb} in the upper half space.

%=====================================================================================
\begin{theorem}
For all $x,y\in\mathbb{H}^{n}$, we have
\begin{equation}\label{a}
\frac{1}{4}\rho_{\mathbb{H}^{n}}(x,y)
\leq\tilde{\tau}_{\mathbb{H}^n}(x,y)
\leq\rho_{\mathbb{H}^{n}}(x,y)\,,
\end{equation}
and both the inequalities are sharp.
In addition, for all $x,y\in\mathbb{H}^{n}$, we have
\begin{equation}\label{b}
\tilde{\tau}_{\mathbb{H}^n}(x,y)
\leq\frac{1}{2}\rho_{\mathbb{H}^{n}}(x,y)+\log\frac{5}{4}\,,
\end{equation}
and the inequality is sharp.
\end{theorem}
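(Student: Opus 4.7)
The overall plan is to obtain \eqref{a} directly from the $\rho$--$j$--$\tilde{\tau}$ chain supplied by Lemmas~\ref{c} and~\ref{le7}, and to prove \eqref{b} via one elementary geometric observation that reduces the problem to verifying $(e^{\rho/2}-2)^2\ge 0$; sharpness in all three cases will be read off from explicit test configurations using the formulas of Sections~\ref{section 2}--3. In particular, for \eqref{a}, I would chain Lemma~\ref{le7} (giving $\tfrac12 j_D\le\tilde{\tau}_D\le j_D$) with Lemma~\ref{c} (giving $\tfrac12\rho_D\le j_D\le\rho_D$) on $D=\mathbb{H}^n$, obtaining $\tfrac14\rho_{\mathbb{H}^n}\le\tilde{\tau}_{\mathbb{H}^n}\le\rho_{\mathbb{H}^n}$ at once.

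For \eqref{b}, the key observation is that for every $p=(p',0)\in\partial\mathbb{H}^n$ and every $z=(z',z_n)\in\mathbb{H}^n$ one has $|z-p|^2=|z'-p'|^2+z_n^2\ge z_n^2$, so
\[
\sqrt{|x-p|\,|y-p|}\ge\sqrt{x_n y_n}\quad\text{for all }p\in\partial\mathbb{H}^n.
\]
Substituting this into the definition of $\tilde{\tau}_{\mathbb{H}^n}$ and invoking the identity $\sinh(\rho/2)=|x-y|/(2\sqrt{x_n y_n})$, which follows from \eqref{cro} via $\cosh\rho-1=2\sinh^2(\rho/2)$, would yield
\[
\tilde{\tau}_{\mathbb{H}^n}(x,y)\le\log\!\left(1+2\sinh\tfrac{\rho_{\mathbb{H}^n}(x,y)}{2}\right).
\]
Setting $t=e^{\rho/2}\ge 1$, the desired inequality $\log(1+2\sinh(\rho/2))\le\tfrac12\rho+\log\tfrac54$ is equivalent to $4(1+t-1/t)\le 5t$, which clears to $(t-2)^2\ge 0$, with equality exactly at $t=2$, i.e., $\rho=\log 4$.

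For sharpness I would exhibit three families. For the right side of \eqref{a}, take $x=e_n$, $y=re_n$ with $r\to 1^+$: Lemma~\ref{lem4} and \eqref{hro} yield $\tilde{\tau}/\rho\to 1$. For the left side of \eqref{a}, take $x=e_n+te_1$, $y=e_n-te_1$ with $t\to\infty$, where rotational symmetry about the $e_n$-axis reduces the $\tilde{\tau}$ computation to Lemma~\ref{lem3}(1), giving $\tilde{\tau}=\log(1+\sqrt{2t})\sim\tfrac12\log(2t)$, while \eqref{cro} gives $\rho=\arch(1+2t^2)\sim 2\log(2t)$, so the ratio tends to $1/4$. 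For \eqref{b}, take $x=e_n$, $y=4e_n$, so that $\rho=\log 4$ and Lemma~\ref{lem4} gives $\tilde{\tau}=\log(5/2)$, which matches $\tfrac12\rho+\log\tfrac54$ exactly and realizes the equality case $t=2$ of the calculus step.

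I do not anticipate a serious obstacle. The one point worth double-checking is whether the elementary bound $\sqrt{|x-p||y-p|}\ge\sqrt{x_n y_n}$ is tight enough to deliver the sharp constant $\log(5/4)$: in the vertical configuration $x'=y'$ the bound becomes an equality at $p=(x',0)\in\partial\mathbb{H}^n$, so Lemma~\ref{lem4} is recovered exactly and no sharpness is lost at the extremal pair $(e_n,4e_n)$.
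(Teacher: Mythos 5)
Your proposal is correct and follows essentially the same route as the paper: \eqref{a} via chaining Lemma~\ref{le7} with Lemma~\ref{c}, \eqref{b} via the bound $\inf_{p\in\partial\mathbb{H}^n}|x-p||y-p|\ge x_ny_n$ combined with \eqref{cro} and the elementary inequality $1+e^{t/2}-e^{-t/2}\le\frac54 e^{t/2}$ (your $(t-2)^2\ge0$ reformulation), and sharpness from the special-case formulas of Lemmas~\ref{lem3} and~\ref{lem4}. Your test configurations differ only cosmetically from the paper's (e.g.\ $x=e_n$, $y=4e_n$ instead of $x=2e_n$, $y=\frac12 e_n$) and deliver the same limits.
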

%=====================================================================================
%=====================================================================================
\begin{proof}
Inequalities \eqref{a} are a consequence of Lemma \ref{le8} and Lemma \ref{le7}.

For the sharpness of the left-hand side of inequalities \eqref{a},
let $x=t e_{1}+e_{n}$ and $y=e_{n}$ with $t>2$.
By Lemma \ref{lem3}(1) and \eqref{cro}, we get
\begin{equation*}
\lim_{t\rightarrow\infty}\frac{\tilde{\tau}_{\mathbb{H}^{n}}(x,y)}{\rho_{\mathbb{H}^{n}}(x,y)}
=\lim_{t\rightarrow\infty}\frac{\log\left(1+\sqrt{t}\right)}{\log\left(1+\frac{t^{2}+\sqrt{t^{4}+4t^{2}}}{2}\right)}
=\frac{1}{4}\,.
\end{equation*}

For the sharpness of the right-hand side of inequalities \eqref{a},
let $x=t e_{n}$ and $y=\frac{1}{t}e_{n}$ with $t>1$.
By Lemma \ref{lem4} and \eqref{hro}, we get
\begin{equation*}
\lim_{t\rightarrow1^+}\frac{\tilde{\tau}_{\mathbb{H}^{n}}(x,y)}{\rho_{\mathbb{H}^{n}}(x,y)}
=\lim_{t\rightarrow1^+}\frac{\log(1+(t-\frac{1}{t}))}{\log(1+(t^{2}-1))}
=1\,.
\end{equation*}

To prove inequality \eqref{b}, we first observe that
\begin{equation*}
\inf_{\xi\in\mathbb{H}^{n}}|x-\xi||y-\xi|\geq x_{n}y_{n}\,,\quad {\rm for\,\, all}\,\, x,y\in\mathbb{H}^{n}\,,
\end{equation*}
and the equality holds when $y-x$ is orthogonal to $\partial\mathbb{H}^{n}.$

Since
\begin{equation*}
1+\sqrt{2({\rm ch}\,t-1)}=1+e^{t/2}-e^{-t/2}\leq\frac{5}{4}e^{t/2}\,,\quad {\rm for\,\, all}\,\, t\geq0\,,
\end{equation*}
together with \eqref{cro}, we have
\begin{align*}
\tilde{\tau}_{\mathbb{H}^{n}}(x,y)
&\leq\log\left(1+\frac{|x-y|}{\sqrt{x_{n}\,y_{n}}}\right)\\
&=\log\left(1+\sqrt{2({\rm ch}\,\rho_{\mathbb{H}^{n}}(x,y)-1)}\right)\\
&\le\frac{1}{2}\,\rho_{\mathbb{H}^{n}}(x,y)+\log\frac{5}{4}\,.
\end{align*}

To prove the sharpness of inequality \eqref{b},
let $x=2 e_{n}$ and $y=\frac{1}{2}e_{n}$.
By Lemma \ref{lem4} and \eqref{hro},
we get
\begin{equation*}
\tilde{\tau}_{\mathbb{H}^{n}}(x,y)=\log\frac{5}{2}
\quad {\rm and} \quad
\rho_{\mathbb{H}^{n}}(x,y)=\log 4\,.
\end{equation*}
Hence
\begin{equation*}
\tilde{\tau}_{\mathbb{H}^n}(x,y)
=\frac{1}{2}\rho_{\mathbb{H}^{n}}(x,y)+\log\frac{5}{4}.
\end{equation*}

This completes the proof.
\end{proof}
%=====================================================================================

\bigskip

%%%%%%%%%%%%%%%%%%%%%%%%%%%%%%%%%%%%%%%%%%%%%%%%%%%%%%%%%%%%%%%%%%%%%
%%%%%%%%%%%%%%%%%%%%%%%%%%%%%%%%%%%%%%%%%%%%%%%%%%%%%%%%%%%%%%%%%%%%%
%%%%%%%%%%%%%%%%%%%%%%%%%%%%%%%%%%%%%%%%%%%%%%%%%%%%%%%%%%%%%%%%%%%%%
\section{The $\tilde{\tau}$-metric and M\"obius transformations}\label{taumob}
%%%%%%%%%%%%%%%%%%%%%%%%%%%%%%%%%%%%%%%%%%%%%%%%%%%%%%%%%%%%%%%%%%%%%
%%%%%%%%%%%%%%%%%%%%%%%%%%%%%%%%%%%%%%%%%%%%%%%%%%%%%%%%%%%%%%%%%%%%%
%%%%%%%%%%%%%%%%%%%%%%%%%%%%%%%%%%%%%%%%%%%%%%%%%%%%%%%%%%%%%%%%%%%%%

Ibragimov studied the distortion properties of $\tilde{\tau}$-metric under M\"obius transformations of the unit ball \cite[Theorem 4.1, Theorem 4.2]{Ibragimov}. Later,
Mohapatra and Sahoo considered the same problem in the punctured unit ball \cite[Theorem 2.1]{MS}.
For instance, the following quasi-isometry property was obtained in \cite{Ibragimov}.

%==========================================================================================
\begin{theorem}\cite[Theorem 4.2]{Ibragimov}
%If $D$ and $D'$ are proper subdomains of $\mathbb{R}^{n}$ and
If $f: \overline{{\mathbb R}} ^n\to \overline{{\mathbb R}}^n$ is a M\"{o}bius transformation with $f\mathbb{B}^n=\mathbb{B}^n$,
then for all $x,y\in \mathbb{B}^n$, we have
\begin{equation}\label{ibragimov-quasiiso}
\frac{1}{2}\tilde{\tau}_{\mathbb{B}^n}(x,y)-\log \frac 54\leq\tilde{\tau}_{\mathbb{B}^n}(f(x),f(y))\leq 2\tilde{\tau}_{\mathbb{B}^n}(x,y)+\log \frac 54\,.
\end{equation}
\end{theorem}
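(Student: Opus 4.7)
The plan is to derive both inequalities as direct corollaries of Theorem \ref{trb} combined with the Möbius invariance of the hyperbolic metric on the unit ball, namely $\rho_{\mathbb{B}^n}(f(x),f(y))=\rho_{\mathbb{B}^n}(x,y)$ whenever $f$ is a Möbius self-map of $\mathbb{B}^n$. Theorem \ref{trb} supplies exactly the ingredients needed: the two-sided comparison $\tfrac14 \rho_{\mathbb{B}^n}\leq \tilde\tau_{\mathbb{B}^n}\leq \rho_{\mathbb{B}^n}$ and the sharper additive bound $\tilde\tau_{\mathbb{B}^n}\leq \tfrac12 \rho_{\mathbb{B}^n}+\log\tfrac54$. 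The strategy is to sandwich $\tilde\tau_{\mathbb{B}^n}(f(x),f(y))$ between multiples of $\rho_{\mathbb{B}^n}(x,y)$ via these inequalities and then convert $\rho_{\mathbb{B}^n}$ back into $\tilde\tau_{\mathbb{B}^n}$.

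For the upper bound, I would first apply the additive inequality from Theorem \ref{trb} at the pair $(f(x),f(y))$, then use Möbius invariance of $\rho_{\mathbb{B}^n}$, then invoke the lower bound $\rho_{\mathbb{B}^n}(x,y)\leq 4\tilde\tau_{\mathbb{B}^n}(x,y)$:
\begin{equation*}
\tilde\tau_{\mathbb{B}^n}(f(x),f(y))\leq \tfrac12 \rho_{\mathbb{B}^n}(f(x),f(y))+\log\tfrac54 = \tfrac12 \rho_{\mathbb{B}^n}(x,y)+\log\tfrac54 \leq 2\tilde\tau_{\mathbb{B}^n}(x,y)+\log\tfrac54.
\end{equation*}

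For the lower bound, the cleanest path is by symmetry. Since $f^{-1}$ is also a Möbius self-map of $\mathbb{B}^n$, the upper bound just proved applies with $f$ replaced by $f^{-1}$ and $(x,y)$ replaced by $(f(x),f(y))$, yielding $\tilde\tau_{\mathbb{B}^n}(x,y)\leq 2\tilde\tau_{\mathbb{B}^n}(f(x),f(y))+\log\tfrac54$, which rearranges to $\tilde\tau_{\mathbb{B}^n}(f(x),f(y))\geq \tfrac12\tilde\tau_{\mathbb{B}^n}(x,y)-\tfrac12\log\tfrac54$, a statement slightly stronger than the one claimed. A direct alternative is to chain $\tilde\tau_{\mathbb{B}^n}(f(x),f(y))\geq \tfrac14 \rho_{\mathbb{B}^n}(f(x),f(y))=\tfrac14 \rho_{\mathbb{B}^n}(x,y)$ with $\rho_{\mathbb{B}^n}(x,y)\geq 2\tilde\tau_{\mathbb{B}^n}(x,y)-2\log\tfrac54$ obtained from the additive inequality of Theorem \ref{trb}.

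There is no real obstacle: the result is essentially a bookkeeping corollary of Theorem \ref{trb} together with the fact that Möbius self-maps of $\mathbb{B}^n$ act as isometries on $(\mathbb{B}^n,\rho_{\mathbb{B}^n})$. The only conceptual point to record is why one cannot hope for a Lipschitz distortion with constant one: the metric $\tilde\tau_{\mathbb{B}^n}$ is only similarity-invariant, and the multiplicative constant $2$ together with the additive constant $\log\tfrac54$ measure precisely the defect between $\tilde\tau_{\mathbb{B}^n}$ and its Möbius-invariant partner $\rho_{\mathbb{B}^n}$ as encoded in Theorem \ref{trb}.
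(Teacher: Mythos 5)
Your argument is correct, and it even proves slightly more than stated: both of your routes to the lower bound give $\tilde{\tau}_{\mathbb{B}^n}(f(x),f(y))\ge \tfrac12\tilde{\tau}_{\mathbb{B}^n}(x,y)-\tfrac12\log\tfrac54$, which implies the claimed inequality. Note, however, that the paper itself offers no proof of this statement: it is quoted verbatim from Ibragimov (Theorem 4.2 there) as motivation, so there is no in-paper argument to compare against. Your derivation is a legitimate self-contained one within this paper's logical order, since it uses only the inequalities \eqref{trb1} and \eqref{trb2} of Theorem \ref{trb} (which appear earlier and are themselves imported from Ibragimov's Theorem 3.8, independent of his Theorem 4.2) together with the M\"obius invariance of $\rho_{\mathbb{B}^n}$ coming from \eqref{absratrho}; there is no circularity. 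It is worth recording what the paper's own machinery buys instead: Theorem \ref{moblip}, proved via the M\"obius invariance of the Cassinian-type metric $\tau_D$ (Lemma \ref{lelip1}) and the sandwich $\tfrac12\tau_D\le\tilde{\tau}_D\le\tau_D$ (Lemma \ref{lelip2}), removes the additive constant $\log\tfrac54$ entirely and holds for arbitrary proper subdomains and arbitrary M\"obius transformations, whereas your route is tied to $\mathbb{B}^n$ (or any domain carrying a hyperbolic metric with the comparison constants of Theorem \ref{trb}) and necessarily retains an additive term inherited from \eqref{trb2}. Your closing remark should also be tempered accordingly: the constant $\log\tfrac54$ is an artifact of passing through $\rho_{\mathbb{B}^n}$, not an intrinsic defect, precisely because Theorem \ref{moblip} shows the quasi-isometry holds with no additive constant at all.
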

%==========================================================================================

In this section, we continue the investigation on the distortion of $\tilde{\tau}$-metric of general domains under M\"obius transformations. In particular, we show that in \eqref{ibragimov-quasiiso} the additional constant $\log\frac{5}{4}$ can be removed and the constants $\frac{1}{2}$ and $2$ are the best possible.

%==========================================================================================
\begin{theorem}\label{moblip}
If $D$ and $D'$ are proper subdomains of $\mathbb{R}^{n}$ and
if $f: \overline{{\mathbb R}} ^n\to \overline{{\mathbb R}}^n$ is a M\"{o}bius transformation with $fD=D'$,
then for all $x,y\in D$, we have
\begin{equation*}
\frac{1}{2}\tilde{\tau}_{D}(x,y)\leq\tilde{\tau}_{D'}(f(x),f(y))\leq 2\tilde{\tau}_{D}(x,y)\,.
\end{equation*}
\end{theorem}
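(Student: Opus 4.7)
The plan is to derive the theorem directly by using the M\"obius invariant Cassinian metric $\tau_D$ as a bridge. The point is that $\tilde{\tau}_D$ itself is not M\"obius invariant, but $\tau_D$ is (Lemma \ref{lelip1}), and Lemma \ref{lelip2} provides the two-sided comparison $\tfrac{1}{2}\tau_D(x,y)\le\tilde{\tau}_D(x,y)\le\tau_D(x,y)$. Chaining these should give a clean Lipschitz constant of $2$, strictly better than what one would get by comparing with the hyperbolic metric via Theorem \ref{trb} (which would only yield a constant $4$, since $\rho_D$ is M\"obius invariant but the ratio $\tilde{\tau}_D/\rho_D$ varies between $1/4$ and $1$).

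For the upper bound, I would simply write the three-step chain
$$\tilde{\tau}_{D'}(f(x),f(y))\;\le\;\tau_{D'}(f(x),f(y))\;=\;\tau_{D}(x,y)\;\le\;2\tilde{\tau}_{D}(x,y),$$
where the first inequality is Lemma \ref{lelip2} applied in $D'$, the middle equality is the M\"obius invariance of $\tau$ from Lemma \ref{lelip1}, and the final inequality is Lemma \ref{lelip2} applied in $D$.

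For the lower bound, the cleanest route is to apply the already-proved upper bound to the M\"obius transformation $f^{-1}\colon\overline{\mathbb{R}}^n\to\overline{\mathbb{R}}^n$, which satisfies $f^{-1}(D')=D$, evaluated at the pair $(f(x),f(y))\in D'$. This gives
$$\tilde{\tau}_{D}(x,y)=\tilde{\tau}_{f^{-1}(D')}\bigl(f^{-1}(f(x)),f^{-1}(f(y))\bigr)\;\le\;2\tilde{\tau}_{D'}(f(x),f(y)),$$
which rearranges to the desired inequality after dividing by $2$.

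There is essentially no obstacle; the whole argument is a one-line computation once the invariance of $\tau_D$ is granted. The only detail to check in passing is that the inverse of a M\"obius transformation is again a M\"obius transformation and carries $D'$ onto $D$, so that Lemmas \ref{lelip1} and \ref{lelip2} apply symmetrically on both sides. I would not expect to need any of the special formulas from Section 3; those are reserved for establishing the sharpness claim advertised in the introduction to the section, where one would choose $D=D'=\mathbb{B}^n$ and test with configurations (such as a pair on a diameter mapped by a suitable M\"obius self-map of $\mathbb{B}^n$ to a pair on an orthogonal chord) that simultaneously saturate both applications of Lemma \ref{lelip2}.
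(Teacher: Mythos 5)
Your proposal is correct and is essentially the paper's own argument: the paper proves Theorem \ref{moblip} exactly by combining the M\"obius invariance of $\tau_D$ (Lemma \ref{lelip1}) with the two-sided comparison $\tfrac12\tau_D\le\tilde\tau_D\le\tau_D$ (Lemma \ref{lelip2}), just as you do. Your use of $f^{-1}$ for the lower bound is an equivalent rephrasing of applying the same chain in the other direction, and the sharpness question is indeed handled separately (in Theorem \ref{thlip}) using the special formulas from Section 3.
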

%==========================================================================================
%==========================================================================================
\begin{proof}
The proof follows from Lemma \ref{lelip2} and Lemma \ref{lelip1} immediately.
\end{proof}
%==========================================================================================

\medskip

The following theorem shows that the above constants $\frac{1}{2}$ and $2$  can not be improved.

%==========================================================================================
\begin{theorem}\label{thlip}
Let $f: {\mathbb H}^n \to {\mathbb B}^n=f{\mathbb H}^n$  be a M\"obius transformation.
Then for all $x,y\in\mathbb{H}^{n}$\,, we have
\begin{equation*}
\frac{1}{2}\tilde{\tau}_{\mathbb{H}^{n}}(x,y)\leq\tilde{\tau}_{\mathbb{B}^{n}}(f(x),f(y))\leq 2\tilde{\tau}_{\mathbb{H}^{n}}(x,y),
\end{equation*}
and the constants $\frac{1}{2}$ and $2$ are the best possible.
\end{theorem}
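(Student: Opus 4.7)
The two inequalities follow immediately by applying Theorem \ref{moblip} with $D=\mathbb{H}^n$ and $D'=\mathbb{B}^n$, so the whole content of the statement is the sharpness of $\tfrac{1}{2}$ and $2$. My plan is to fix the concrete M\"obius map $f(x)=-e_n+2(x+e_n)/|x+e_n|^{2}$, i.e.\ the inversion in the sphere $S(-e_n,\sqrt{2})$, which sends $\mathbb{H}^n$ onto $\mathbb{B}^n$ with $f(e_n)=0$, and then to exhibit two one-parameter families of pairs $(x,y)\subset \mathbb{H}^n$ along which the ratio $\tilde{\tau}_{\mathbb{B}^n}(f(x),f(y))/\tilde{\tau}_{\mathbb{H}^n}(x,y)$ tends to $2$ and to $\tfrac{1}{2}$ respectively.

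To see that $2$ cannot be reduced, I would take $x_t=te_1+e_n$, $y_t=e_n$ and let $t\to\infty$. Both points lie at Euclidean distance $1$ from $\partial\mathbb{H}^n$ and are horizontally separated by $t>2$, so Lemma \ref{lem3}(1) gives $\tilde{\tau}_{\mathbb{H}^n}(x_t,y_t)=\log(1+\sqrt{t})\sim \tfrac{1}{2}\log t$. A direct substitution shows that $f(y_t)=0$ and $f(x_t)=\tfrac{2t}{t^{2}+4}e_1-\tfrac{t^{2}}{t^{2}+4}e_n$ is collinear with the origin, with $|f(x_t)|=t/\sqrt{t^{2}+4}$ and $1-|f(x_t)|\sim 2/t^{2}$, so Remark \ref{rmk-tau0x} yields $\tilde{\tau}_{\mathbb{B}^n}(f(x_t),f(y_t))=\log(1+|f(x_t)|/\sqrt{1-|f(x_t)|})\sim\log t$, and the ratio tends to $2$. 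Symmetrically, to see that $\tfrac{1}{2}$ cannot be increased, I would take $x_\epsilon=e_n$, $y_\epsilon=e_n+\epsilon e_1$ and let $\epsilon\to 0^{+}$. With $d=1$ and $|x_\epsilon-y_\epsilon|=\epsilon\le 2d$, Lemma \ref{lem3}(2) gives $\tilde{\tau}_{\mathbb{H}^n}(x_\epsilon,y_\epsilon)=\log(1+2\epsilon/\sqrt{4+\epsilon^{2}})\sim\epsilon$, while $f(x_\epsilon)=0$ and $f(y_\epsilon)=\tfrac{2\epsilon}{4+\epsilon^{2}}e_1-\tfrac{\epsilon^{2}}{4+\epsilon^{2}}e_n$ has $|f(y_\epsilon)|=\epsilon/\sqrt{4+\epsilon^{2}}\sim\epsilon/2$, so Remark \ref{rmk-tau0x} gives $\tilde{\tau}_{\mathbb{B}^n}(f(x_\epsilon),f(y_\epsilon))\sim\epsilon/2$ and the ratio tends to $\tfrac{1}{2}$.

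The main obstacle is identifying these two qualitatively different regimes: natural first guesses, such as vertically aligned points in $\mathbb{H}^n$ pushed toward $\partial\mathbb{H}^n$ or diametrically opposite points in $\mathbb{B}^n$ pushed to $\partial\mathbb{B}^n$, all yield ratio $1$. Conceptually, the lower bound is an infinitesimal phenomenon at $e_n=f^{-1}(0)$, where the pointwise ratio $\tilde{\tau}_{\mathbb{B}^n}/\rho_{\mathbb{B}^n}$ at $0$ equals $\tfrac{1}{2}$ while $\tilde{\tau}_{\mathbb{H}^n}/\rho_{\mathbb{H}^n}$ at $e_n$ equals $1$; the upper bound is the opposite, large-scale phenomenon exploiting the gap between $\tilde{\tau}_{\mathbb{H}^n}\sim\tfrac{1}{4}\rho$ for horizontally separated equal-depth pairs in $\mathbb{H}^n$ and $\tilde{\tau}_{\mathbb{B}^n}\sim\tfrac{1}{2}\rho$ for their $f$-images, which become radially aligned toward $-e_n$ in $\mathbb{B}^n$.
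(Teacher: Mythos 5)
Your proposal is correct, and its overall architecture matches the paper's: the two-sided inequality is quoted from Theorem \ref{moblip}, and sharpness is shown with the standard inversion $f(z)=-e_n+2(z+e_n)/|z+e_n|^2$. Your family for the constant $2$ (namely $x=te_1+e_n$, $y=e_n$, $t\to\infty$, handled via Lemma \ref{lem3}(1) and Remark \ref{rmk-tau0x}) is exactly the paper's, with the same asymptotics $\log(1+\sqrt t)$ versus $\log t$. For the constant $\tfrac12$, however, you take a genuinely different route: an infinitesimal pair $x=e_n$, $y=e_n+\epsilon e_1$, $\epsilon\to0^+$, compared through Lemma \ref{lem3}(2) and Remark \ref{rmk-tau0x}, exploiting that at $0\in\mathbb{B}^n$ the metric $\tilde\tau_{\mathbb{B}^n}$ sees $\sqrt{1-|z|}\approx 1$ while $\tilde\tau_{\mathbb{H}^2}$ at depth $1$ behaves like half the chord; the paper instead uses the large-scale radial pair $x=te_n$, $y=\tfrac1t e_n$, $t\to\infty$, via Lemma \ref{lem4} and Lemma \ref{lem2}(2), getting $\log(1+\sqrt t-1/\sqrt t)$ versus $\log(1+t-1/t)$. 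Both computations are valid (your expansions $\tilde\tau_{\mathbb{H}^n}\sim\epsilon$ and $\tilde\tau_{\mathbb{B}^n}\sim\epsilon/2$ check out), and your local example has the conceptual merit of isolating the constant $\tfrac12$ as a derivative-type phenomenon at $f^{-1}(0)$, whereas the paper's example stays within the radially symmetric closed-form lemmas. One small difference worth noting: the paper prefaces its examples with the observation that, by composing with similarity self-maps of $\mathbb{H}^n$ and orthogonal self-maps of $\mathbb{B}^n$, it suffices to treat this one $f$, which upgrades the sharpness to every Möbius $f:\mathbb{H}^n\to\mathbb{B}^n$; your argument, as written, fixes a single $f$, which still establishes that the constants in the stated inequality cannot be improved, but you may wish to add the same reduction sentence if you intend the sharpness per individual $f$.
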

%==========================================================================================
%==========================================================================================
\begin{proof}
The inequalities are clear from Theorem \ref{moblip}. Next we show the sharpness of the inequalities.
%It suffices to show the sharpness of the inequalities by Theorem \ref{moblip}.

Since $\tilde{\tau}$-metric is invariant under translations and stretchings of $\mathbb{H}^{n}$ onto itself
and orthogonal transformations of $\mathbb{B}^{n}$ onto itself,
it suffices to consider
\begin{equation*}
f(z)=-e_n+\frac{2(z+e_n)}{|z+e_n|^2}\,.
\end{equation*}

Let $x=t e_n$ and $y=\frac{1}{t} e_n$ with $t>1\,.$
Then
\begin{equation*}
f(x)=-\frac{t-1}{t+1}e_n \quad\quad\quad~~~ {\rm and} \quad\quad\quad~~~  f(y)=\frac{t-1}{t+1}e_n\,.
\end{equation*}

By Lemma \ref{lem2}(2) and Lemma \ref{lem4},  we get
\begin{equation*}
\lim_{t\rightarrow\infty}\frac{\tilde{\tau}_{\mathbb{B}^{n}}\left(f(x),f(y)\right)}{\tilde{\tau}_{\mathbb{H}^{n}}(x,y)}
=\lim_{t\rightarrow\infty}\frac{\log\left(1+(\sqrt t-\frac{1}{\sqrt{t}})\right)}{\log\left(1+(t-\frac{1}{t})\right)}
=\lim_{t\rightarrow\infty}\frac{\log\left(\sqrt t-\frac{1}{\sqrt{t}}\right)}{\log\left(t-\frac{1}{t}\right)}
=\frac{1}{2}\,.
\end{equation*}

\medskip

Let $x=t e_{1}+e_n$ and $y=e_n$ with $t>2$.
Then
\begin{equation*}
f(x)=\frac{2t}{t^{2}+4} e_{1}-\frac{t^2}{t^2+4} e_n \quad\quad\quad~~~ {\rm and} \quad\quad\quad~~~  f(y)=0\,.
\end{equation*}

By Remark \ref{rmk-tau0x} and Lemma \ref{lem3}(1),  we get
\begin{equation*}
\lim_{t\rightarrow\infty}\frac{\tilde{\tau}_{\mathbb{B}^{n}}\left(f(x),f(y)\right)}{\tilde{\tau}_{\mathbb{H}^{n}}(x,y)}
=\lim_{t\rightarrow\infty}\frac{\log\left(1+\frac{t}{\sqrt{t^{2}+4-t\sqrt{t^{2}+4}}}\right)}{\log\left(1+\sqrt{t}\right)}\\
=\lim_{t\rightarrow\infty}\frac{\log\left(\frac{t}{\sqrt{t^2+4-t\sqrt{t^2+4}}}\right)}{\log{\sqrt{t}}}
=2\,.
\end{equation*}

This completes the proof.
\end{proof}
%============================================================================================

%%%%%%%%%%%%%%%%%%%%%%%%%%%%%%%%%%%%%%%%%%%%%%%%%%%%%%%
\subsection*{Acknowledgments}
This research was supported by National Natural Science Foundation of China (NNSFC) under Grant No.11771400 and No.11601485 ,
and Science Foundation of Zhejiang Sci-Tech University (ZSTU) under Grant No.16062023\,-Y.
%%%%%%%%%%%%%%%%%%%%%%%%%%%%%%%%%%%%%%%%%%%%%%%%%%%%%%%

%%%%%%%%%%%%%%%%%%%%%%%%%%%%%%%%%%%%%%%%%%%%%%%%%%%%%%%

%%%%%%%%%%%%%%%%%%%%%%%%%%%%%%%%%%%%%%%%%%%%%%%%%%%%%%%


\begin{thebibliography}{HIMPSI}

%=========================================================================================
%\bibitem{Link}
%[Link] https://en.wikipedia.org/wiki/Cassinian\_oval
%\bibitem{Ibragimov1}
%Z. Ibragimov, A scale-invariant Cassinian metric.J.Anal.24(1),111-129(2016)

%=========================================================================================
\bibitem{avv}
{\sc  G. D. Anderson, M. K. Vamanamurthy, and M. Vuorinen},
Conformal invariants, inequalities, and quasiconformal maps.
J. Wiley, 1997.

%=========================================================================================
\bibitem{b}
{\sc A. F. Beardon},
The geometry of discrete groups.
Graduate Texts in Math., Vol. 91, Springer-Verlag, New York, 1983.


%=========================================================================================
\bibitem{b98}
{\sc A. F. Beardon},
The Apollonian metric of a domain in $\Rn$.
Quasiconformal mappings and analysis (Ann Arbor, MI, 1995), 91--108,
Springer, New York, 1998.

%=========================================================================================
\bibitem{dhv}
{\sc O. Dovgoshey,  P. Hariri, and  M. Vuorinen},
Comparison theorems for hyperbolic type metrics.
Complex Var. Elliptic Equ. 61 (2016), 1464--1480.
%http://dx.doi.org/10.1080/17476933.2016.1182517,
%{arXiv:1504.04487} [math.MG].

%=========================================================================================
\bibitem{fmv}
{\sc M. Fujimura, M. Mocanu, and M. Vuorinen},
Barrlund's distance function and quasiconformal maps.
Complex Var. Elliptic Equ. 2020 (to appear).
%DOI: 10.1080/17476933.2020.1751137

%=========================================================================================
\bibitem{gh}
{\sc F. W. Gehring and  K. Hag},
The ubiquitous quasidisk.
With contributions by Ole Jacob Broch.
Mathematical Surveys and Monographs, 184.
American Mathematical Society,
Providence, RI, 2012. xii+171 pp.


%=========================================================================================
\bibitem{gp}
{\sc  F. W. Gehring and  B. P. Palka},
Quasiconformally homogeneous domains.
J. Analyse Math. 30 (1976), 172--199.


%=========================================================================================
\bibitem{hkvz}
{\sc P. Hariri, R. Kl\'en, M. Vuorinen,  and X. Zhang},
Some remarks on the Cassinian metric.
Publ. Math. Debrecen 90 (2017), 269--285.
%DOI: 10.5486/PMD.2017.7386 {arXiv: 1504.01923} [math.CA].


%=========================================================================================
\bibitem{h0}
{\sc  P. H\"{a}st\"{o}},
A new weighted metric: the relative metric I.
J. Math. Anal. Appl. 274 (2002), 38--58.

%=========================================================================================
\bibitem{h}
{\sc  P. H\"{a}st\"{o}},
A new weighted metric: the relative metric. II.
J. Math. Anal. Appl. 301 (2005), 336--353.

%=========================================================================================
\bibitem{h1}
{\sc  P. H\"{a}st\"{o}},
Inequalities of generalized hyperbolic metrics.
Rocky Mountain J. Math. 37 (2007), 189--202.

%=========================================================================================
\bibitem{i1}
{\sc Z. Ibragimov},
The Cassinian metric of a domain in $\overline{\mathbb{R}}^n$.
Uzbek. Math. Zh. (2009), 53--67.

%=========================================================================================
\bibitem{Ibragimov}
{\sc Z. Ibragimov},
A scale-invariant Cassinian metric.
J. Anal.  24 (2016), 111--129.


%=========================================================================================
\bibitem{imsz}
{\sc Z. Ibragimov, M. R. Mohapatra, S. K. Sahoo, and X. Zhang},
Geometry of the Cassinian metric and its inner metric.
Bull. Malays. Math. Sci. Soc. 40 (2017), 361--372.
%DOI: 10.1007/s40840-015-0246-6.

%=========================================================================================
\bibitem{I2}
{\sc Z. Ibragimov},
M\"obius invariant Cassinian metric.
Bull. Malays. Math. Sci. Soc. 42 (2019), 1349--1367.
%https://doi.org/10.1007/s40840-017-0550-4.




%=========================================================================================
\bibitem{klvw}
{\sc  R. Kl\'en,  H. Lind\'en, M. Vuorinen, and G. Wang},
The visual angle metric and M\"obius transformations.
Comput. Methods Funct. Theory 14 (2014), 577--608.

%=========================================================================================
\bibitem{kms}
{\sc  R. Kl\'en,   M. R. Mohapatra, and S. K. Sahoo},
Geometric properties of the Cassinian metric.
Math. Nachr. 290 (2017), 1531--1543.

%=========================================================================================
\bibitem{MS}
{\sc M. R. Mohapatra and S. K. Sahoo},
Mapping properties of a scale invariant Cassinian metric and a Gromov hyperbolic metric.
Bull.  Aust. Math. Soc.  97 (2018), 141--152.

%=========================================================================================
\bibitem{MS2}
{\sc M. R. Mohapatra and S. K. Sahoo},
A Gromov hyperbolic metric vs the hyperbolic and other related metrics.
Comput. Methods Funct. Theory 18 (2018),  473--493.

%=========================================================================================
\bibitem{s99}
{\sc P. Seittenranta},
M\"obius-invariant metrics.
Math. Proc. Cambridge Philos. Soc. 125 (1999), 511--533.

%=========================================================================================
\bibitem{vu2}
{\sc  M. Vuorinen},
Conformal geometry and quasiregular mappings.
Lecture Notes in Math. 1319,
Springer-Verlag, Berlin, 1988.

%=========================================================================================
\bibitem{wv}
{\sc  G. Wang and M. Vuorinen},
The visual angle metric and quasiregular maps.
Proc. Amer. Math. Soc. 144 (2016), 4899--4912.

%=========================================================================================
\bibitem{z}
{\sc  X. Zhang},
Comparison between a Gromov hyperbolic metric and the hyperbolic metric.
Comput. Methods Funct. Theory 18 (2018), 717--722.

\end{thebibliography}
\end{document}